\documentclass[12pt,a4paper]{amsart}
\usepackage{amsmath,amsthm,amssymb,latexsym,a4wide,epic,eepic,bbm,mathrsfs,amsfonts,cases}
\usepackage{color,yfonts,indentfirst,bbold,dsfont}

\newtheorem{theorem}{Theorem}
\newtheorem{lemma}[theorem]{Lemma}
\newtheorem{corollary}[theorem]{Corollary}
\newtheorem{proposition}[theorem]{Proposition}
\newtheorem{example}[theorem]{Example}
\newtheorem{question}[theorem]{Question}

\usepackage[all]{xy}
\usepackage[active]{srcltx}
\usepackage[parfill]{parskip}
\usepackage{enumerate}
\numberwithin{equation}{section}

\newcommand{\tto}{\twoheadrightarrow}

\begin{document}

\title[Duflo involutions for tree quivers]{Duflo involutions for $2$-categories\\ associated to tree quivers}
\author{Xiaoting Zhang}

\begin{abstract}
Motivated by the definition of Duflo involution for fiat $2$-categories, we define certain analogues of
Duflo involution for arbitrary finitary $2$-categories and show that such Duflo involutions exist
for two classes of finitary $2$-categories associated with tree path algebras. Additionally, we
describe the quiver for the algebra underlying the principal $2$-representation for these
two classes of finitary $2$-categories.
\end{abstract}

\maketitle

\section{Introduction and description of results}\label{s1}

$2$-representation theory has its origins in the papers \cite{Kh,BFK,CR,KL,Ma,Ro} and is nowadays understood as
the study of $2$-representations of various kinds of $2$-categories. $2$-categorical analogues of
finite dimensional algebras are so-called {\em finitary} $2$-categories introduced in \cite{MM1}.
Basics of $2$-representation theory for finitary $2$-categories were developed in \cite{MM1,MM2,MM3,MM4,MM5,MM6}.

An important class of finitary $2$-categories is that of {\em fiat} $2$-categories, that is
finitary $2$-categories with weak involution and adjunction $2$-morphisms, see \cite{MM1}.
$2$-representation theory of fiat $2$-categories is much better understood than that in the
general case (compare the results from \cite{MM1,MM2,MM3,MM5,MM6} on fiat $2$-categories with
the results in \cite{MM2,MM4} on general finitary $2$-categories).
A major role in this theory is played by the so-called {\em Duflo involutions},
that is special indecomposable $1$-morphisms introduced in \cite{MM1}, which are important for
the definition and study of cell $2$-representations, that is ``simple'' $2$-representations of
fiat $2$-categories.

The combinatorics of a finitary $2$-category is determined by an algebraic structure called a
{\em multisemigroup}, whose ``structural'' units are called {\em cells} (left, right or two-sided).
In \cite{MM1} it is shown that each left cell of a fiat category contains a unique Duflo involution.
It is therefore natural to ask whether Duflo involutions can be defined  for general finitary $2$-categories.

In this paper, motivated by properties of Duflo involutions in the fiat case obtained in \cite{MM1}, we propose
an abstract definition of Duflo involution for arbitrary finitary $2$-categories. We are not able to
prove existence of Duflo involutions in the general case, however, we consider two natural classes of
finitary (not fiat) $2$-categories associated with tree path algebras (one of these classes originates in
\cite{GM1,GM} and the other one is new) and show that in these two cases Duflo involutions
do exist. However, a new feature is that these Duflo involution are no longer elements inside the
cell (in the general case) but can also exist outside the cell. We also check existence of
Duflo involution for the basic example of the $2$-category of projective functors associated to
any finite dimensional algebra.

Our approach is based on determination of combinatorial structure of all involved $2$-categories
together with explicit computation of projective presentation for simple objects in abelianized principal
$2$-representations of our finitary $2$-categories. As a bonus, we describe the quiver for the underlying
algebra of the principal $2$-representations for all these $2$-categories.

The paper is organized as follows: In Section~\ref{s2} we collect basic notions on $2$-categories and path algebras
for tree quivers. We define {\em signed Hasse diagram} to describe the poset of ideals for path algebras and
prove some auxiliary results. We also give an abstract definition of Duflo involutions for finitary $2$-categories.
In Section~\ref{s3}, we study the case of a $2$-category of dual projection functors associated to a tree quiver,
prove existence of Duflo involutions for all cells of this $2$-category and describe the quiver for the
path algebra underlying its principal $2$-representation. In Section~\ref{s4}, we study the
$2$-category $\mathscr{C}_B$ of projective bimodules over any basic, connected, finite dimensional
$\Bbbk$-algebra $B$ and determine Duflo involutions for all cells of $\mathscr{C}_B$.
We also describe a major part of the quiver for the path algebra underlying its principal $2$-representation.
In Section~\ref{s5} we define a new $2$-category, containing the $2$-category studied in
Section~\ref{s3} as a $2$-subcategory, and prove that this new $2$-category is a finitary $2$-category.
We describe all cells in this $2$-category and also prove existence of Duflo involutions for all left cells.
To describe the corresponding quiver for the path algebra underlying the principal $2$-representation, we
consider the ``natural'' $A$-$A$-bimodules obtained by tensoring the left and right ``natural''
$A$-modules. This bimodule happens to contain, as subbimodules,  all bimodules corresponding to
$1$-morphisms in our new $2$-category and in this way it provides crucial information about $2$-morphisms
in a very easy and unified way.
We also extend to this bimodules the partial order from Subsection~\ref{s2.2} and the corresponding
covering relations. Finally, we complete the paper with several  examples illustrating our results in Section~\ref{s6}.
\vspace{5mm}

\textbf{Acknowledgment.}
The paper was written during the visit of the author to Uppsala University supported by China Scholarship Council.
We thank Uppsala University for hospitality. We are very grateful to Volodymyr Mazorchuk
for useful discussions and valuable comments on the draft of this paper.

\section{Preliminaries}\label{s2}

\subsection{ Notation and setup}\label{s2.1}

We work over a fixed algebraically closed field $\Bbbk$. All categories and functors considered in this
note are assumed to be  $\Bbbk$-linear, that is enriched over $\Bbbk$-Mod. If not stated otherwise, by
a module we always mean a left module.

For a finite dimensional associative $\Bbbk$-algebra $A$ we denote by $A$-mod the (abelian) category of
all finitely generated $A$-modules and by mod-$A$ the (abelian) category of all finitely generated right $A$-modules.
Denote by $A$-mod-$A$ the category of all finitely generated $A$-$A$-bimodules. By an ideal we mean a two-sided ideal.

For an $2$-category $\mathscr{C}$, we keep the notational conventions from \cite{MM2}. We will denote objects of
$\mathscr{C}$ by $\mathtt{i},\mathtt{j}$ and so on; $1$-morphisms  of $\mathscr{C}$ by $F,G$ and so on,
$2$-morphisms of $\mathscr{C}$ by $\alpha,\beta$ and so on. The identity $1$-morphism in
$\mathscr{C}(\mathtt{i},\mathtt{i})$ will be denoted by $\mathbb{1}_\mathtt{i}$ for all objects $\mathtt{i}$
and composition of $1$-morphisms will be denoted by $\circ$.  Horizontal composition of $2$-morphisms will be denoted by $\circ_0$ and vertical
composition of $2$-morphisms will be denoted by $\circ_1$. For simplicity, we take $F(\alpha)$ and $\alpha_F$ short for $\mathrm{id}_F\circ_0\alpha$ and $\alpha\circ_0\mathrm{id}_F$ respectively.

\subsection{Path algebra of a tree quiver}\label{s2.2}

Let $Q=(Q_0, Q_1, s, t)$ be a finite {\em tree quiver}, that is a quiver whose underlying graph is a tree.
Here $Q_0$ is the set of vertices, $Q_1$ is the set of arrows, $s:Q_1\to Q_0$ is the source function and
$t:Q_1\to Q_0$ is the target function. Denote by $A=\Bbbk Q$ the corresponding path algebra. The algebra
$A$ is naturally graded by path lengths,
\begin{displaymath}
A=\bigoplus_{i\geq0}A_i,
\end{displaymath}
where $A_i$ is the linear span of all paths of length $i$ under the convention that each arrow in $Q_1$ has length one.
We denote by $Q_i$ the set of all paths in $Q$ of length $i$ and set
\begin{displaymath}
Q^p:=\bigcup_{i\geq 0}Q_i.
\end{displaymath}
The set $Q^p$ is finite. We denote by $\mathfrak{l}:Q^p\to \{0,1,2,\dots\}$ the {\em length function}, that is
the function which assigns the length of the path to each path.
For a vertex $v$ we denote by $\varepsilon_v$ the corresponding trivial path in $Q$ of length zero and in this
way we identify vertices in $Q$ with paths of length zero.

\begin{lemma}\label{lm1}
For each pair of vertices $i,j\in Q_0$, there exists at most one path between them.
\end{lemma}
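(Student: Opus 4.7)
The plan is to reduce the statement to the standard graph-theoretic fact that in a tree there is a unique simple walk between any two vertices. I would start by letting $p_1 = a_1\cdots a_m$ and $p_2 = b_1\cdots b_n$ be two paths in $Q$ from $i$ to $j$ and viewing them as walks in the underlying undirected graph of $Q$.

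The first step is to observe that, since the underlying graph of $Q$ is a tree and hence has no cycles and no multi-edges, the quiver $Q$ cannot contain any oriented cycle: any oriented cycle would descend to an undirected cycle in the underlying graph. As a consequence, every directed path in $Q$ is automatically vertex-simple, since a repeated vertex inside a directed path would produce a nontrivial oriented cycle from that vertex to itself.

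Consequently both $p_1$ and $p_2$, viewed in the underlying graph, are simple walks from $i$ to $j$ in a tree. Invoking the uniqueness of simple walks between two vertices in a tree, they must coincide as sequences of edges, so in particular $m = n$ and the $r$-th edges of $p_1$ and $p_2$ agree for each $r$. Because in a quiver each edge has a fixed orientation, this forces $a_r = b_r$ for all $r$, and hence $p_1 = p_2$. The special case $i = j$ is also covered: any non-trivial path from $i$ to $i$ would itself be an oriented cycle, so the only path is $\varepsilon_i$.

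The only delicate point is the verification that a directed path in a tree quiver must be vertex-simple; this is essentially the acyclicity observation above. After that the argument is a direct appeal to the uniqueness of simple paths in a tree, and no further machinery is required.
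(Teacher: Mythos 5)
Your proof is correct and follows the same route as the paper, which simply asserts that the lemma follows from the underlying graph of $Q$ being a tree; you have filled in the details (acyclicity of the quiver, vertex-simplicity of directed paths, uniqueness of simple paths in a tree) that the paper leaves implicit. The one point you flag as delicate is handled adequately: since the underlying tree has no multiple edges, a directed walk cannot backtrack, so a nontrivial oriented closed walk would indeed yield an undirected cycle.
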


\begin{proof}
This directly follows from the fact that the underlying graph for $Q$ is a tree.
\end{proof}

For $w,w'\in Q^p$ we write $w\preceq w'$ if $w'=wa$ or $aw$ for some $a\in Q^p$.
We also write $w\prec w'$ if $w\preceq w'$ and $w\neq w'$. Then
$\preceq$ is a partial order on $Q^p$. If $w\prec w'$ and there is no other path $u$ such that
$w\prec u\prec w'$, the we say that $w'$ \textit{covers} $w$. We denote by $C(w)$ the set of all paths covering $w$.
Note that the relation $w'$ covers $w$ implies that $\mathfrak{l}(w')=\mathfrak{l}(w)+1$.

Using the covering relation we may draw the {\em signed Hasse diagram} of the poset $Q^p$ in the following way:
\begin{itemize}
\item we write all paths of length zero in the top row, all paths of length one in row two, all paths of
length two in row three and so on;
\item we connect $w$ and $w'$ by a solid edge if $w'=aw$ for some $a\in Q_1$;
\item we connect $w$ and $w'$ by a dashed edge if $w'=wa$ for some $a\in Q_1$.
\end{itemize}

Note that our signed Hasse diagram contains slightly more information than the ordinary Hasse diagram of $Q^p$ (which
only encodes the covering relation).

\begin{example}\label{ex2}
{\rm
Let $A=\Bbbk Q$, where $Q$ is given by the left hand side of the following picture.
The right hand side then represents the signed Hasse diagram of $Q^p$.
\begin{equation}\label{eq2.1}
\xymatrix{&&3&&&\varepsilon_1\ar@{-}[d] & \varepsilon_2 \ar@{-}[d]\ar@{--}[dl]\ar@{-}[dr] & \varepsilon_3\ar@{--}[dl]|\hole & \varepsilon_4\ar@{--}[dl]\\
1 \ar[r]^\alpha & 2\ar[ur]^\beta\ar[dr]_\gamma&&&&\alpha \ar@{-}[d]\ar@{-}[dr] & \beta \ar@{--}[dl]|\hole & \gamma  \ar@{--}[dl] &\\
&&4&&&\beta\alpha & \gamma\alpha  &&}
\end{equation}
From the signed Hasse diagram it is clear that the paths $\beta\alpha$ and $\gamma\alpha$ are the two maximal
elements in $Q^p$ and the paths $\varepsilon_1,\varepsilon_2,\varepsilon_3,\varepsilon_4$ are the
four minimal elements in $Q^p$.
}
\end{example}

For any subset $X\subset A$, we have the ideal $I_X=\langle X\rangle$
in $A$ defined as the minimal ideal of $A$ containing $X$ (or, the ideal of $A$ generated by $X$).

\begin{lemma}\label{lm3}
{\tiny\hspace{2mm}}
\begin{enumerate}[$($i$)$]
\item\label{lm3.1} The map $X\mapsto I_X$ is a bijection from the set of anti-chains in $Q^p$ to the set of
all two-sided ideals in $A$.
\item\label{lm3.2} For each anti-chain $X$ in $Q^p$, the set $X$ is a minimal set of generators in $I_X$ in the sense
that $I_Y\neq I_X$ for any proper subset $Y$ of $X$.
\end{enumerate}
\end{lemma}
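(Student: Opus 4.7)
The plan is to leverage Lemma~\ref{lm1} to first prove the stronger structural statement that every ideal of $A$ has a $\Bbbk$-basis consisting of paths, and then translate the classification into poset combinatorics on $(Q^p, \preceq)$.

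First, I would establish the path-basis reduction: for any ideal $I \triangleleft A$ and any element $x = \sum_{w \in Q^p} c_w\, w \in I$, each individual summand $c_w w$ lies in $I$. Indeed, for any pair of vertices $i,j \in Q_0$, one has $\varepsilon_j x \varepsilon_i \in I$, and by Lemma~\ref{lm1} this truncation equals $c_w w$ for the unique path $w$ (if it exists) from $i$ to $j$. Summing over all pairs recovers $x$, showing $I = \operatorname{span}_\Bbbk(I \cap Q^p)$. Setting $P(I) := I \cap Q^p$, the next observation is that $P(I)$ is an up-set with respect to $\preceq$: if $w \in P(I)$ and $w \preceq w'$, then $w' = aw$ or $w' = wa$ for some $a \in Q^p$, so $w' \in I$.

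Next, I would invoke the elementary fact that in a finite poset the assignment $U \mapsto \min(U)$ is a bijection from up-sets to anti-chains, with inverse $X \mapsto \{w : x \preceq w \text{ for some } x \in X\}$. Composing with the correspondence between ideals and up-sets established above, the map $I \mapsto \min(P(I))$ is a bijection between the set of ideals of $A$ and the set of anti-chains in $Q^p$; moreover its inverse sends an anti-chain $X$ to the $\Bbbk$-span of $\{w : x \preceq w \text{ for some } x \in X\}$, which by construction coincides with $I_X$. This yields part~\eqref{lm3.1}.

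For part~\eqref{lm3.2}, suppose $Y \subsetneq X$ and pick $x \in X \setminus Y$. The up-set associated to $I_Y$ consists of paths dominating some element of $Y$; if $x$ belonged to it, some $y \in Y \subseteq X$ would satisfy $y \preceq x$, contradicting the fact that $X$ is an anti-chain. Hence $x \notin I_Y$, so $I_Y \neq I_X$. The only real obstacle is the path-basis reduction, and it is mild because Lemma~\ref{lm1} does the work: in a non-tree quiver one could have distinct parallel paths $i \to j$, the idempotent truncation $\varepsilon_j x \varepsilon_i$ would only isolate their joint linear combination, and the bijection with anti-chains would genuinely fail — so the tree hypothesis is essential precisely at this step.
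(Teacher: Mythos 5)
Your proposal is correct and follows essentially the same route as the paper: the key step in both is the idempotent truncation $\varepsilon_j x\varepsilon_i$ combined with Lemma~\ref{lm1} to show every ideal is spanned by the paths it contains, after which the classification reduces to the standard correspondence between up-sets and anti-chains (the paper phrases surjectivity as $I=I_{I\cap Q^p}$ followed by passing to minimal elements, and injectivity via the argument you give for part~(ii), but these are the same combinatorics). Your remark that the tree hypothesis is exactly what makes the truncation isolate a single path is also the correct diagnosis of where the argument would fail for general quivers.
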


\begin{proof}
Let $X$ and $Y$ be two different anti-chains in $Q^p$. Without loss of generality we may assume that
there is $x\in X$ such that $x\not\in Y$. If $x\not\in I_Y$, then $I_X\neq I_Y$ and we are done.
If $x\in I_Y$, then there is $y\in Y$ and a path $a$ such that $x=ay$ or $x=ya$. In each of this latter cases
we have $y\not\in I_X$ and hence again $I_X\neq I_Y$. This proves claim~\eqref{lm3.2}.
Claim~\eqref{lm3.2} implies injectivity of the map $X\mapsto I_X$ in claim~\eqref{lm3.1}.

Let $I$ be a non-zero ideal of $A$ and $x\in I$. Then, by Lemma~\ref{lm1}, for every $i,j\in Q_0$ we have that
$\varepsilon_i x\varepsilon_j$ a scalar multiple of an element in $Q^p$. If this scalar is non-zero, then the
corresponding element in $Q^p$ belongs to $I$. As the identity in $A$ is the sum of
all $\varepsilon_i$, it follows that each element in $I$ is a linear combination of paths in $I$.
Therefore $I=I_X$, where $X= I\cap Q^p$.

Now, for any $Y\subset Q^p$ and any $w,w'\in Y$ such that $w\prec w'$,
the ideal  $I_{Y\setminus\{w'\}}$ contains $w'$ (because it contains $w$) and hence $I_Y=I_{Y\setminus\{w'\}}$.
Since $Q^p$ is finite, $Y$ is finite as well, and thus $I_Y=I_Z$, where $Z$ is the set of all minimal elements in $Y$.
This proves that the map $X\mapsto I_X$ in claim~\eqref{lm3.1} is surjective and completes the proof.
\end{proof}

In what follows we will always assume that an ideal in $A$ is generated by some paths.
By Lemma~\ref{lm3}, we may assume that, moreover, there is a unique minimal way for such description of an ideal.

For an ideal $I$, denoted its minimal set of path generators by $G(I)$. The set $\mathcal{I}(A)$ consisting of all ideals of $A$ is partially ordered with respect to inclusions.
We denote by $\mathcal{I}(A)^{\mathrm{ind}}$ the subset of
$\mathcal{I}(A)$ consisting of all indecomposable ideals. Then $\mathcal{I}(A)^{\mathrm{ind}}$ inherits from
$\mathcal{I}(A)$ the poset structure. The covering relation for $\mathcal{I}(A)$ has the following
property:

\begin{lemma}\label{lm4}
Let $I\subset J$ be two ideals in $A$. Then $J$ covers $I$ if and only if $\dim(J/I)=1$.
\end{lemma}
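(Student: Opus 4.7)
The plan is to use Lemma~\ref{lm3} to translate the statement into combinatorics on $Q^p$: for any ideal $L$ of $A$, the set $P(L):=L\cap Q^p$ is a $\Bbbk$-basis of $L$, so $\dim(J/I)=|P(J)\setminus P(I)|$. The direction ``$\Leftarrow$'' is then immediate, since any ideal $K$ strictly between $I$ and $J$ would force $0<\dim(K/I)<\dim(J/I)=1$, which is impossible.

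For the converse I would proceed by contrapositive, assuming $|P(J)\setminus P(I)|\geq 2$ and constructing an ideal $K$ with $I\subsetneq K\subsetneq J$. The idea is to pick a $\preceq$-maximal element $w$ in the finite nonempty set $P(J)\setminus P(I)$ and set $K:=I+\langle w\rangle$. Both $I\subsetneq K$ and $K\subseteq J$ follow at once from $w\in J\setminus I$, so the only content of the argument is showing $K\neq J$.

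For this I would first establish that $\langle w\rangle\cap Q^p=\{u\in Q^p:w\preceq u\}$. Any element of $\langle w\rangle$ is a $\Bbbk$-linear combination of products $awb$ with $a,b\in Q^p$, and by Lemma~\ref{lm1} each such product is either zero or a single path having $w$ as a subpath; since paths form a $\Bbbk$-basis of $A$, this forces any path lying in $\langle w\rangle$ to satisfy $w\preceq u$, while the reverse inclusion is obvious. Now choose $v\in P(J)\setminus P(I)$ with $v\neq w$, which is possible by the cardinality assumption. Maximality of $w$ inside $P(J)\setminus P(I)$ forbids $w\prec v$, and $v\neq w$ rules out equality, so $w\not\preceq v$. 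Hence $v$ lies in neither $I$ nor $\langle w\rangle$, so $v\notin K$, proving $K\subsetneq J$.

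The main subtlety, and the only place the tree hypothesis genuinely enters, is the description of $\langle w\rangle\cap Q^p$ via Lemma~\ref{lm1}: without the uniqueness of paths between vertices, one could a priori have a sum $a_1wb_1+a_2wb_2$ collapse to a path which does not contain $w$ as a subpath, and the whole strategy of carving off a maximal element would break down. Everything else in the argument is routine combinatorial bookkeeping about the finite poset $(Q^p,\preceq)$.
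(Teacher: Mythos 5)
Your proof is correct and is essentially the paper's argument in contrapositive form: both identify $\dim(J/I)$ with $|(J\cap Q^p)\setminus(I\cap Q^p)|$, pick a $\preceq$-maximal element $w$ of that difference, and consider the ideal generated by $I$ and $w$ — the paper uses the covering hypothesis to force this ideal to equal $J$ and hence the difference to be a singleton, while you use the cardinality hypothesis to exhibit it as a strictly intermediate ideal. The extra care you take in describing $\langle w\rangle\cap Q^p$ via Lemma~\ref{lm1} is a sound (and slightly more explicit) justification of a step the paper leaves implicit.
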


\begin{proof}
The ``if'' part is clear, so we prove the ``only if'' part. Assume that $J$ covers $I$.
Let $X=I\cap Q^p$ and $Y= J\cap Q^p$. Then
$X\subset Y$ and $I\neq J$ implies $X\neq Y$. Let $y$ be a maximal element in $Y\setminus X$. Then the
ideal $I'$ generated by $I$ and $y$ properly contains $I$ and is contained in $J$. Hence $I'$ coincides with $J$
since $J$ covers $I$. This shows that $Y\setminus X=\{y\}$. Moreover, we also have that each element of
$I'$ is a linear combination of $y$ and elements in $X$. Therefore $\dim(I'/I)=1$, which completes the proof.
\end{proof}

Note that the covering relation in $\mathcal{I}(A)^{\mathrm{ind}}$ is different from that in $\mathcal{I}(A)$.
To distinguish them, we will call the covering relation in $\mathcal{I}(A)^{\mathrm{ind}}$ the {\em ind-covering}. For $I,J\in \mathcal{I}(A)^{\mathrm{ind}}$, by $I\Subset J$ we mean that $J$ ind-covers $I$.

If $I$ is an ideal and $G(I)=\{u_1,u_2,\ldots,u_k\}$, then for every $i=1,2,\dots,k$ the ideal $I$ covers
the ideal $I_X$, where
\begin{displaymath}
X=\{u_1,u_2,\dots,u_{i-1},u_{i+1},u_{i+2},\dots,u_k\}\cup C(u_i).
\end{displaymath}
Note that this $X$ is not necessarily a minimal set of generators for $I_X$. Indeed,
the ideal $I=\langle \varepsilon_1, \varepsilon_2\rangle$ in Example~\ref{ex2} covers
$\langle \varepsilon_1, \beta,\gamma\rangle$ and $\langle\varepsilon_2\rangle$ and we may observe that
$\{\varepsilon_1\}\cup C(\varepsilon_2)$ is not a minimal generating set for
$\langle \varepsilon_1, \beta,\gamma\rangle$ since $C(\varepsilon_2)=\{\alpha,\beta,\gamma\}$.

For each ideal $I$, set $Pa(I):=I\cap Q^p$. The set $Pa(I)$ is an upper set of the poset $Q^p$ and thus
inherits from $Q^p$ the structure of a poset. The signed Hasse diagram of $Pa(I)$ is a full subdiagram of the
signed Hasse diagram for $Q^p$.

\begin{lemma}\label{lm5}
An ideal $I$ is indecomposable if and only if its  signed Hasse diagram is connected.
\end{lemma}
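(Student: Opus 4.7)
The plan is to exploit the fact that $Pa(I)=I\cap Q^p$ is a $\Bbbk$-basis of $I$ (established in the proof of Lemma~\ref{lm3} via $\varepsilon_i x\varepsilon_j\in\Bbbk Q^p$) and then match bimodule decompositions of $I$ with vertex-disjoint unions of the signed Hasse diagram of $Pa(I)$. The key observation is that, by the very definition of the signed Hasse diagram, the solid (resp.\ dashed) edges on $Pa(I)$ are exactly the pairs $(p,ap)$ (resp.\ $(p,pa)$) with $p\in Pa(I)$, $a\in Q_1$ and $ap\in Pa(I)$ (resp.\ $pa\in Pa(I)$). Here I interpret ``indecomposable'' as indecomposable as an $A$-$A$-bimodule, noting that any bimodule summand of $I$ is automatically an ideal of $A$.

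For the ``only if'' direction I argue by contrapositive. Assume the signed Hasse diagram of $Pa(I)$ is disconnected, say $Pa(I)=P_1\sqcup P_2$ where $P_1,P_2$ are nonempty unions of connected components with no edges between them. Set $I_j:=\mathrm{span}_{\Bbbk}(P_j)$. I would check $I_j$ is an ideal: multiplication by $\varepsilon_v$ acts diagonally on paths and hence preserves each $P_j$, while for an arrow $a\in Q_1$ and $p\in P_1$ with $ap\neq 0$ we have $ap\in I$, so $ap\in Pa(I)$; since $ap$ would be joined to $p$ by a solid edge, the absence of such edges between $P_1$ and $P_2$ forces $ap\in P_1$. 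The dashed-edge case is symmetric, and extending by induction on path length gives closure under the whole $A$. Then $I=I_1\oplus I_2$ is a nontrivial bimodule decomposition, so $I$ is decomposable.

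For the ``if'' direction I again argue by contrapositive. Suppose $I=I_1\oplus I_2$ nontrivially as bimodules; then each $I_j$ is an ideal, hence has basis $Pa(I_j)$. The two bases are disjoint (since $I_1\cap I_2=0$) and their union is $Pa(I)$. If $p\in Pa(I_1)$ and $ap\in Pa(I)$ for some $a\in Q_1$, then $ap\in I_1$ because $I_1$ is an ideal, so $ap\in Pa(I_1)$; the same holds for $pa$. Therefore every solid or dashed edge incident to a vertex of $Pa(I_1)$ stays in $Pa(I_1)$, and the signed Hasse diagram splits into (at least) two components, so it is disconnected.

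The only subtle point, which will be the main thing to verify carefully, is that the basis $Pa(I_j)$ of a bimodule summand $I_j$ of $I$ really lives inside $Pa(I)$ and that together $Pa(I_1)$ and $Pa(I_2)$ exhaust $Pa(I)$; this is where the path-basis result extracted from the proof of Lemma~\ref{lm3} does the essential work. Once this is in hand, everything reduces to the edge-preservation argument given above.
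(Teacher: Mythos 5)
Your proof is correct and takes essentially the same route as the paper: the paper's proof is a one-line assertion that bimodule decompositions $I=I_1\oplus I_2$ correspond exactly to disjoint-union decompositions of the signed Hasse diagram, and your argument simply supplies the details of that correspondence in both directions (using the path basis $Pa(I)$ from Lemma~\ref{lm3} and the edge-preservation observation). No gaps.
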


\begin{proof}
This follows from the fact that, if $I=I_1\oplus I_2$, then the  signed Hasse diagram for $I$ is a disjoint union
of the signed Hasse diagrams for $I_1$ and $I_2$, and vise versa.
\end{proof}

\subsection{Finitary $2$-category}\label{s2.3}
Denote by \textbf{Cat} the category of all small categories.
By a $2$-category we mean a category enriched over \textbf{Cat}. Recall from \cite{MM1} that a $2$-category $\mathscr{C}$ is called \textit{finitary} over $\Bbbk$ provided that
\begin{itemize}
\item $\mathscr{C}$ has finitely many objects;
\item every $\mathscr{C}(\mathtt{i},\mathtt{j})$ is an idempotent split $\Bbbk$-linear category with finitely many isomorphism classes of indecomposable objects and finite dimensional spaces of morphisms;
\item all compositions are biadditive and $\Bbbk$-linear;
\item all identity $1$-morphisms are indecomposable.
\end{itemize}

\subsection{The multisemigroup associated to a finitary 2-category and cells}\label{s2.4}

For a finitary $2$-category $\mathscr{C}$, we denote by $\mathcal{S}_\mathscr{C}$ the set of isomorphism classes of all indecomposable $1$-morphisms in $\mathscr{C}$ with an added external zero element 0.
Following \cite{MM2}, the finite set $\mathcal{S}_\mathscr{C}$  becomes a \textit{multisemigroup} with the multiplication $\star $ defined for $[F],[G]\in\mathcal{S}_\mathscr{C}$ in the following way:
\begin{displaymath}
[F]\star[G]=\left\{\begin{array}{ll} \{[H]\in \mathcal{S}_\mathscr{C}|\ H\
\text{is isomorphic to a direct summand of}\ F\circ G \},  & F\circ G\neq 0; \\ 0, & \text{otherwise}.
\end{array}\right.
\end{displaymath}
This multisemigroup can be equipped with several natural preorders. We refer the reader to
\cite{KM} for more information on multisemigroups.

For two $1$-morphisms $F$ and $G$, we say $G\geq_L F$ in the \textit{left preorder} if there exists a $1$-morphism $H$ such
that $[G]\in [H]\star[F]$. We set $G\sim_L F$ if and only if $G\geq_L F$ and $G\leq_L F$, then
$\sim_L$ is an equivalent relation. A {\em left cell} is an equivalence class of $\sim_L$.
Similarly one defines the {\em right} and {\em two-sided} preorders $\geq_R$ and $\geq_J$ and the corresponding
{\em right} and {\em two-sided} cell by multiplying with $[H]$ on the right, respectively, with $[H]$ and $[H']$ on both sides.

\subsection{$2$-representations of $\mathscr{C}$}\label{s2.5}

Let $\mathscr{C}$ be a finitary $2$-category.
A $2$-representation of $\mathscr{C}$ is a $2$-functor to some other fixed $2$-category
(see \cite{Mc} for basics on $2$-categories and $2$-functors).
Important classes of $2$-representations are (see \cite{MM1,MM2} for details)

\begin{itemize}
\item {\em Finitary additive $2$-representations}, that is $2$-representations in which each object is represented by
an idempotent split $\Bbbk$-linear additive category with finitely many indecomposable objects, each
$1$-morphism is represented by an additive functor and each $2$-morphism is represented by a natural
transformation of functors.
\item {\em Abelian $2$-representations},  that is $2$-representations in which each object is represented by
a category equivalent to a module category for some finite dimensional associative $\Bbbk$-algebra, each
$1$-morphism is represented by an additive functor and each $2$-morphism is represented by a natural
transformation of functors.
\end{itemize}
$2$-representations of $\mathscr{C}$ form a $2$-category where the $1$-morphisms are $2$-natural transformations
and the $2$-morphisms are modifications, see \cite{MM1,MM2} for details. The category of
finitary additive $2$-representations is denoted by $\mathscr{C}$-add and the category of
finitary abelian $2$-representations is denoted by $\mathscr{C}$-mod.

Define the {\em abelianization} functor to be a $2$-functor from $\mathscr{C}\text{-add}$ to $\mathscr{C}\text{-mod}$, denoted by
\begin{displaymath}
\overline{\,\cdot\,}:\mathscr{C}\text{-add}\to \mathscr{C}\text{-mod},
\end{displaymath}
in the sense as defined in \cite[Subsection~4.2]{MM2}: for $\textbf{M}\in\mathscr{C}\text{-add}$ and $\mathtt{i}\in\mathscr{C}$, $\overline{\textbf{M}}(\mathtt{i})$ is a category with objects of all diagrams of the form $X\overset{\alpha}{\to}Y$, where $X,Y\in\textbf{M}(\mathtt{i})$ and $\alpha\in \mathrm{Hom}_{\textbf{M}(\mathtt{i})}(X,Y)$, and morphisms between two objects are commutative squares modulo factorization of the right downwards arrow, which makes the right downwards triangle commutes, using a homotopy. The $2$-action of $\mathscr{C}$ on $\overline{\textbf{M}}(\mathtt{i})$
is defined component-wise, and $2$-natural transformations and modifications in $\mathscr{C}\text{-add}$ can also be extended component-wise.

Let $\textbf{M}$, $\textbf{N}$ be two $2$-representations of $\mathscr{C}$, we say they are {\em equivalent} if there is a $2$-natural transformation between $\textbf{M}$ and $\textbf{N}$ such that the restriction of it to every object of $\mathscr{C}$ is an equivalence of categories. Recall from \cite[Proposition~2]{MM3}, this definition is compatible with the non-strict version of $2$-natural transformation, on which we won't focus in this paper.

To simplify the notation, we identify an indecomposable $1$-morphisms with its isomorphism class in $\mathcal{S}_\mathscr{C}$, writing $F\in\mathcal{S}_\mathscr{C}$ instead of $[F]\in\mathcal{S}_\mathscr{C}$.

Let $\mathcal{L}$ be a left cell. Since multiplication from the left does not change the source of the
original morphism, there is an $\mathtt{i}=\mathtt{i}_\mathcal{L}\in\mathscr{C}$ such that for any $1$-morphism $F\in \mathcal{L}$ we have $F\in\mathscr{C}(\mathtt{i},\mathtt{j})$ for some $\mathtt{j}\in\mathscr{C}$.

Consider the {\em principal} $2$-representation
\begin{displaymath}
\mathbf{P}_{\mathtt{i}}:= \mathscr{C}(\mathtt{i},{}_-):\mathscr{C}\to \mathbf{Cat},
\end{displaymath}
where for $\mathtt{j}\in\mathscr{C}$, the $2$-action of $\mathscr{C}$ on $\textbf{P}_{\mathtt{i}}(\mathtt{j})$
is given by the corresponding left horizontal composition. For any $\textbf{M}\in\mathscr{C}\text{-add}$ we have the Yoneda equivalence of categories, see \cite[Subsection~2.1]{Le},
\begin{displaymath}
\mathrm{Hom}_{\mathscr{C}\text{-add}}(\mathbf{P}_{\mathtt{i}},\textbf{M})\cong\textbf{M}(\mathtt{i}).
\end{displaymath}

Let $\overline{\mathbf{P}}_{\mathtt{i}}$ be the corresponding abelianization representation of $\textbf{P}_{\mathtt{i}}$ .
For an indecomposable 1-morphism $F\in\mathscr{C}(\mathtt{i},\mathtt{j})$ denote by $P_F$ the indecomposable projective module $0\longrightarrow F$ in $\overline{\mathbf{P}}_\mathtt{i}(\mathtt{j})$ and denote by $L_F$ its unique simple top.

For an additive category $\mathcal{C}$ and a set $B$ of objects in $\mathcal{C}$, we  denoted by $\mathrm{add}(B)$
the {\em additive closure} of $B$, that is the full subcategory of $\mathcal{C}$ consisting of all objects which
are isomorphic to direct summands of finite direct sums of objects from $B$.

Let $\mathcal{L}$ be a left cell in $\mathscr{C}$ and $\mathtt{i}=\mathtt{i}_\mathcal{L}\in\mathscr{C}$.
For $\mathtt{j}\in\mathscr{C}$ denote by $\mathbf{N}(\mathtt{j})$ the additive closure in $\mathbf{P}_\mathtt{i}(\mathtt{j})$ of all $1$-morphisms $F\in\mathscr{C}(\mathtt{i},\mathtt{j})$ such that $F\geq_L \mathcal{L}$.
Then $\mathbf{N}$ is a $2$-subrepresentation of $\mathbf{P}_\mathtt{i}$.
By \cite[Lemma~3]{MM5}, there exists a unique maximal ideal $\mathbf{I}$ in $\mathbf{N}$ such that it does not contain $\mathrm{id}_F$ for any $F\in\mathcal{L}$.
One defines the quotient $2$-functor $\mathbf{D}_\mathcal{L}:=\mathbf{N}/\mathbf{I}$, called the ({\em additive}) {\em cell} $2$-representations of $\mathscr{C}$ associated to $\mathcal{L}$.

\subsection{Cell $2$-representations for fiat $2$-categories.}\label{s2.6}

Given $\textbf{M}\in\mathscr{C}\text{-mod}$, $\mathtt{i}\in \mathscr{C}$ and $X\in\textbf{M}(\mathtt{i})$, for $\mathtt{j}\in \mathscr{C}$ define $\textbf{M}_{X}(\mathtt{j})$ to be $\mathrm{add}(FX)$, where $F$ runs through the set of all $1$-morphisms in $\mathscr{C}(\mathtt{i},\mathtt{j})$. The $2$-actions of $\mathscr{C}$ on $\textbf{M}_{X}$ are restrictions of that on $\textbf{M}$. Due to the finitary of $\mathscr{C}$ we have $\textbf{M}_{X}\in\mathscr{C}\text{-add}$, see \cite{MM2}.

A $\Bbbk$-finitary $2$-category $\mathscr{C}$ is called {\em fiat} if it has a weak object preserving involutive
anti-equivalence $\ast$ and for any $1$-morphism $F\in\mathscr{C}(\mathtt{i},\mathtt{j})$ there exist $2$-morphisms $\alpha:F\circ F^\ast\to\mathbb{1}_\mathtt{j}$ and
$\beta:\mathbb{1}_\mathtt{i}\to F^\ast\circ F$ such that $\alpha_F\circ_1F(\beta)=\mathrm{id}_F$ and $F^\ast(\alpha)\circ_1\beta_{F^\ast}=\mathrm{id}_{F^\ast}$.

By \cite[Proposition~17]{MM1}, when $\mathscr{C}$ is a fiat $2$-category, there exists a unique $G_\mathcal{L}\in\mathcal{L}$ (called the {\em Duflo involution}) such that the indecomposable projective module $P_{\mathbb{1}_\mathtt{i}}$ has a unique submodule $K_\mathcal{L}$ such that $K_\mathcal{L}$ has a simple top $L_{G_{\mathcal{L}}}$ and $FL_{G_{\mathcal{L}}}\neq0$ for any $F\in\mathcal{L}$, while each $F\in\mathcal{L}$ annihilates every simple subquotient of $P_{\mathbb{1}_\mathtt{i}}/K_\mathcal{L}$.
Set $U:=G_{\mathcal{L}}L_{G_\mathcal{L}}$, then the additive $2$-representation $\textbf{C}_\mathcal{L}:=(\overline{\mathbf{P}}_{\mathtt{i}})_U$ is called the {\em additive cell} $2$-representation of $\mathscr{C}$ with respect to $\mathcal{L}$. The abelianization $\overline{\textbf{C}}_\mathcal{L}$ of $\textbf{C}_\mathcal{L}$ is called the {\em abelian cell} $2$-representation of $\mathscr{C}$ with respect to $\mathcal{L}$.

Note that the additive cell $2$-representation $\textbf{C}_\mathcal{L}$ defined via the Duflo involution are equivalent to $\mathbf{D}_\mathcal{L}$ defined in Subsection\ref{s2.5} when $\mathscr{C}$ is a fiat $2$-category, for more details, see \cite[Proposition~22]{MM2}.

\subsection{Abstract Duflo involution}\label{s2.7}

Inspired by the previous subsection, we propose the following abstract notion for a Duflo involution.
Let $\mathscr{C}$ be a finitary $2$-category and $\mathcal{L}$ be a left cell in $\mathscr{C}$.
Let $\mathtt{i}:=\mathtt{i}_{\mathcal{L}}$.
We will say that an indecomposable $1$-morphism $G$ in $\mathscr{C}$
(which is not necessarily in $\mathcal{L}$) is a {\em Duflo involution} for $\mathcal{L}$ provided that
there is a submodule $K\subset P_{\mathbb{1}_\mathtt{i}}$ in $\overline{\mathbf{P}}_{\mathtt{i}}(\mathtt{i})$ such that
\begin{itemize}
\item $F L_{H}=0$ for all $F\in \mathcal{L}$ and all simple subquotients $L_H$ of $P_{\mathbb{1}_\mathtt{i}}/K$;
\item $K$ has a simple top isomorphic to $L_{G}$;
\item $F L_{G}\neq 0$ for all $F\in \mathcal{L}$.
\end{itemize}

From the previous subsection we know that for each left cell of a fiat category there is a Duflo involution,
moreover, this Duflo involution belongs to this left cell.

In the general case of finitary $2$-categories we do not know whether for each left cell there is
some Duflo involution. In the present paper we propose three different examples of finitary (not fiat)
$2$-categories for which we show that all left cells have Duflo involutions. Moreover, there are cases
when this Duflo involution is {\em not} an element of the corresponding left cell.

\section{Subbimodules of the identity bimodules for tree path algebras}\label{s3}

\subsection{Finitary $2$-category $\mathscr{D}_A$ for a tree path algebra}\label{s3.1}

Let now $A$ be the path algebra of a finite tree quiver $Q$ as described in Subsection~\ref{s2.2}.
Without loss of generality we may assume $Q_0=\{1,2,\ldots,n\}$, where $n$ is a positive integer.
The algebra $A$ is naturally an $A$-$A$-bimodule. We identify subbimodules of $_AA_A$ and ideals of $A$,
in particular, we will say that an ideal $I$ is {\em indecomposable} provided that it is indecomposable
as an $A$-$A$-bimodule.

For an ideal $I$ of $A$, denoted by $\mathrm{Dp}_{I}$ the functor
\begin{displaymath}
I\otimes_A{}_-:A\text{-mod}\to A\text{-mod}.
\end{displaymath}

Let $\mathcal{C}$ be a small category equivalent to $A$-mod. Define the $2$-category $\mathscr{D}_A$ to have
\begin{itemize}
\item one object $\mathtt{i}$ (which we identify with $\mathcal{C}$);
\item as $1$-morphisms, the functors given, up to equivalence with $A$-mod, by
functors from the additive closure of  all $\mathrm{Dp}_{I}$;
\item as $2$-morphisms, all  natural transformations of functors.
\end{itemize}

By \cite[Proposition~13]{GM}, the category $\mathscr{D}_A$ is a finitary $2$-category.
The category $\mathscr{D}_A$ is not a fiat $2$-category unless $Q$ has one vertex.

\subsection{Cells in  $\mathscr{D}_A$}\label{s3.2}

\begin{lemma}\label{lm6}
For each indecomposable ideal $I$ of $A$, the set $\{[\mathrm{Dp}_{I}]\}$ is a left cell,
a right cell and thus a two-sided cell as well.
\end{lemma}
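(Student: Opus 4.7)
The plan is to reduce the cell equivalences in $\mathscr{D}_A$ to containments among ideals of $A$, exploiting that tensoring ideals over the tree path algebra $A$ coincides with multiplying them inside $A$.

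\textbf{Step 1: $J\otimes_A I\cong JI$ for all ideals $J,I$ of $A$.} The natural map $\mu\colon J\otimes_A I\to JI$, $j\otimes i\mapsto ji$, is a surjective bimodule homomorphism. For injectivity, take bases of $J$ and $I$ consisting of paths. An elementary tensor $p\otimes q$, with $p$ a path in $J$ and $q$ a path in $I$, vanishes unless the source of $p$ equals the target of $q$: otherwise, moving the appropriate vertex idempotent $\varepsilon_v$ across the tensor yields $p\otimes 0=0$. When they agree, $pq$ is a genuine path of $A$ lying in $JI$. By Lemma~\ref{lm1}, any other factorization $pq=p'q'$ of this path with $p'\in J$, $q'\in I$ must satisfy $p'=pa$ and $q=aq'$ for some intermediate path $a$, and then $p\otimes q=pa\otimes q'=p'\otimes q'$ in $J\otimes_A I$. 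Hence the nonzero elementary tensors are in bijection with the paths in $JI$, and $\mu$ is a bimodule isomorphism.

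\textbf{Step 2: Dimension bound.} Let $K$ be an indecomposable ideal with $[\mathrm{Dp}_K]\geq_L[\mathrm{Dp}_I]$. By definition of the left preorder and Step~1, $\mathrm{Dp}_K$ is isomorphic to a direct summand of $\mathrm{Dp}_H\circ\mathrm{Dp}_I=\mathrm{Dp}_{HI}$ for some ideal $H$. Thus $K$ is isomorphic, as an $A$-$A$-bimodule, to an indecomposable summand of $HI\subseteq I$, and in particular $\dim K\leq\dim(HI)\leq\dim I$.

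\textbf{Step 3: Conclusion.} If $[\mathrm{Dp}_K]\sim_L[\mathrm{Dp}_I]$, applying Step~2 in both directions yields $\dim K=\dim I$. Every inequality in the chain is then an equality, so $HI=I$, and the summand of $HI=I$ isomorphic to $K$ has dimension $\dim I$; since $I$ is indecomposable, this summand must be $I$ itself. Therefore $K\cong I$ and $[\mathrm{Dp}_K]=[\mathrm{Dp}_I]$, proving $\{[\mathrm{Dp}_I]\}$ is a left cell. Entirely analogous arguments with the product $IH$ in place of $HI$, respectively with $HIH'$, establish the right and two-sided cell statements.

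The main delicacy is the injectivity in Step~1: one has to verify that the tensor-product relations collapse exactly the ambiguous factorizations of a common path and kill precisely the elementary tensors with $pq=0$. Both facts ultimately rest on the uniqueness of paths in a tree provided by Lemma~\ref{lm1}; without the tree hypothesis, multiple independent factorizations would generally cause $\mu$ to have a nontrivial kernel, and the whole containment argument would break down.
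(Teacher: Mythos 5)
Your proof is correct and follows essentially the same route as the paper: both reduce the left-cell equivalence to the identity $\mathrm{Dp}_{H}\circ\mathrm{Dp}_{I}=\mathrm{Dp}_{HI}$ together with the containment $HI\subseteq I$, and then force $K\cong I$ (the paper via the two injective bimodule maps $J\hookrightarrow I\hookrightarrow J$, you via the equivalent dimension count combined with indecomposability). The only real difference is that you verify $J\otimes_{A}I\cong JI$ by hand from the tree structure, where the paper simply invokes hereditarity of $A$ and cites \cite{GM}.
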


\begin{proof}
We only give the proof of the statement for left cells. For right cells the proof is similar
and, put together, they give the statement for two-sided cells.

As $A$ is a path algebra with no relations, it is hereditary,  and thus $\mathrm{Dp}_{I}\circ \mathrm{Dp}_{J}=\mathrm{Dp}_{{IJ}}$ for any ideals $I,J$ of $A$, see \cite{GM}. Let $I,J$ be any two nonisomorphic indecomposable ideals of $A$. Assume that $\mathrm{Dp}_{I}\sim_L\mathrm{Dp}_{J}$. By definition, there exist indecomposable ideals $H,H'$ such that $I$ is isomorphic to a direct summand of $HJ\subset J$ and $J$ is
isomorphic to a direct summand of $H'I\subset I$. This means that there are injective bimodule
homomorphisms $J\hookrightarrow I\hookrightarrow J$ and hence  $I\cong J$.
The claim follows.
\end{proof}

\subsection{Quiver for the underlying algebra for the principal $2$-representation of $\mathscr{D}_A$}\label{s3.3}

The aim of this subsection is to describe the quiver of $\mathscr{D}_A(\mathtt{i},\mathtt{i})$.
For ideals $J$ and $J'$ in $A$ such that $J\subset J'$ we denote by $\iota_{(J,J')}:J\to J'$ the canonical
inclusion. We start with the following observation.

\begin{lemma}\label{lm7}
Let $B$ be a finite dimensional algebra and $M$ be a $B$-module with all composition multiplicities $\leq 1$.
Then for any indecomposable submodule $N$ and any submodule $K$ in $M$ we have:
\begin{enumerate}[$($i$)$]
\item\label{lm7.1} Any non-zero homomorphism from $N$ to $K$ is injective.
\item\label{lm7.2}$\mathrm{Hom}_{B}(N,K)=
\left\{\begin{array}{ll} \Bbbk\iota_{(N,K)},   & if\ N\subset K;\\ 0, & if\ N\not\subset K,
\end{array}\right.$
where $\iota_{(N,K)}$ denotes the natural inclusion.
\end{enumerate}
\end{lemma}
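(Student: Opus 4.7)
The plan is to prove (i) and (ii) simultaneously by showing that any non-zero $\varphi \in \mathrm{Hom}_B(N,K)$ must be a non-zero scalar multiple of the inclusion $\iota_{(N,K)}$; in particular this forces $N \subseteq K$ and $\varphi$ injective. The argument splits into two ingredients: first, use multiplicity-freeness of $M$ to show that $\varphi(N)$ in fact lies inside $N$ (as submodules of $M$); second, use indecomposability of $N$ to show that the resulting endomorphism of $N$ is a scalar.

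For the first step, view $N$ and $\varphi(N)\subseteq K\subseteq M$ as submodules of $M$. The short exact sequence
\[
0 \to N \cap \varphi(N) \to N \oplus \varphi(N) \to N + \varphi(N) \to 0
\]
gives $[N]+[\varphi(N)] = [N\cap\varphi(N)] + [N+\varphi(N)]$ in the Grothendieck group, while $[\varphi(N)]=[N]-[\ker\varphi]$ shows that the multiset of composition factors of $\varphi(N)$ is a sub-multiset of that of $N$. Since every composition multiplicity in $M$ is $\leq 1$, the same holds in $N\cap\varphi(N)$ and $N+\varphi(N)$. For any simple $L$, if $a,b,c,d$ denote its multiplicities in $N,\varphi(N),N\cap\varphi(N),N+\varphi(N)$, we get $a+b=c+d$ with $a,c,d\in\{0,1\}$ and $b\leq a$; the only solutions force $c=b$. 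Hence $\mathrm{CF}(N\cap\varphi(N))=\mathrm{CF}(\varphi(N))$, so $N\cap\varphi(N)=\varphi(N)$, i.e.\ $\varphi(N)\subseteq N$. This lets me factor $\varphi$ through an endomorphism $\tilde\varphi\in\mathrm{End}_B(N)$.

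The second step is the sub-claim $\mathrm{End}_B(N)=\Bbbk$, which is the real subtlety. Because $N$ is indecomposable and finite-dimensional over the algebraically closed field $\Bbbk$, the ring $\mathrm{End}_B(N)$ is local with residue field $\Bbbk$ and nilpotent maximal ideal $J$. Suppose $J\neq 0$ and pick $g\in J\setminus\{0\}$ of smallest nilpotency degree $m\geq 2$; then $h:=g^{m-1}$ satisfies $h\neq 0$ and $h^{2}=g^{2(m-1)}=0$, so $\mathrm{im}(h)\subseteq\ker(h)$. The short exact sequence $0\to\ker h \to N \to \mathrm{im}(h)\to 0$ combined with $\mathrm{im}(h)\subseteq\ker(h)$ forces every composition factor of $\mathrm{im}(h)$ to have multiplicity $\geq 2$ in $N$, contradicting multiplicity-freeness. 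Hence $J=0$.

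Combining the two steps: $\tilde\varphi=\lambda\cdot\mathrm{id}_N$ for some $\lambda\in\Bbbk$, and $\lambda\neq 0$ exactly when $\varphi\neq 0$. In that case $\tilde\varphi$ is an automorphism, so $\varphi$ is injective, which proves (\ref{lm7.1}); moreover $\varphi(N)=N$ as a submodule of $M$, whence $N\subseteq K$ and $\varphi=\lambda\cdot\iota_{(N,K)}$, which proves (\ref{lm7.2}). The main obstacle is precisely the sub-claim $\mathrm{End}_B(N)=\Bbbk$: without it the first step only yields $\varphi(N)\subseteq N$, not scalarity, and the case $\ker\varphi\neq 0$ cannot be excluded by Grothendieck-group bookkeeping alone.
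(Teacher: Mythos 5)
Your proof is correct, but it takes a genuinely different route from the paper's. The paper proves (\ref{lm7.1}) directly: for a non-zero $\varphi$ it observes (as you do, though much more tersely) that $\mathrm{Im}(\varphi)\subseteq N$, deduces from multiplicity-freeness that $N=\mathrm{Ker}(\varphi)\oplus\mathrm{Im}(\varphi)$, and lets indecomposability kill the kernel; for the one-dimensionality in (\ref{lm7.2}) it restricts two non-zero homomorphisms to a simple submodule $L$ in the socle of $N$, uses Schur's lemma to produce a non-trivial linear combination vanishing on $L$, and concludes from (\ref{lm7.1}) that this combination is zero. You instead funnel everything through the single statement $\mathrm{End}_B(N)=\Bbbk$, proved by combining the locality of the endomorphism ring of an indecomposable over the algebraically closed field $\Bbbk$ (which the paper fixes globally, so this is legitimate) with the observation that a non-zero square-zero endomorphism would force a repeated composition factor. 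This is slightly more structural: it delivers (\ref{lm7.1}) and (\ref{lm7.2}) in one stroke and isolates a reusable fact, namely that every indecomposable submodule of a multiplicity-free module is a brick, whereas the paper's socle argument is more hands-on but shorter. One small slip in your bookkeeping for $\varphi(N)\subseteq N$: the constraints you list, namely $a+b=c+d$, $a,c,d\in\{0,1\}$ and $b\le a$, do not by themselves exclude $(a,b,c,d)=(1,0,1,0)$; you also need the evident inequality $c\le b$ (from $N\cap\varphi(N)\subseteq\varphi(N)$) or $d\ge a$ (from $N\subseteq N+\varphi(N)$), after which $c=b$ is indeed forced. With that inequality added, the argument is complete.
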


\begin{proof}
Let $\varphi:N\to K$ be a non-zero homomorphism, $X:=\mathrm{Ker}(\varphi)$ and $Y:=\mathrm{Im}(\varphi)\neq 0$.
Since all composition multiplicities in $M$ are $\leq 1$, the submodule $Y$ of $M$ belongs to $N$.
This implies that is $N=X\oplus Y$. As $N$ is indecomposable and $Y\neq 0$, we obtain $X=0$
which proves claim~\eqref{lm7.1}.

If $N\not\subset K$, then the assumption that all composition multiplicities in $M$ are $\leq 1$ implies
that $N$ has a simple subquotient $L$ which is not a subquotient of $K$. Then $L$ must be annihilated by
any homomorphism from $N$ to $K$ and hence claim~\eqref{lm7.1} implies $\mathrm{Hom}_{B}(N,K)=0$.

Assume that $N\subset K$ and let $\varphi,\psi:N\to K$ be non-zero homomorphisms. Then they both are injective
by claim~\eqref{lm7.1}, in particular, they are injective, when restricted to the socle of $N$.
Let $L$ be a simple subquotient in the socle of $N$. The assumption that all composition multiplicities in $M$
are $\leq 1$ implies that $\varphi$ and $\psi$ both induce isomorphisms when restricted to $L$.
Let $a\varphi\vert_{L}+b\psi\vert_{L}$ be a non-trivial linear combination which annihilates $L$
(it exists by Schur's lemma). But then claim~\eqref{lm7.1} implies that $a\varphi+b\psi$, whose kernel contains
$L$, must be the zero homomorphism. Therefore the space $\mathrm{Hom}_{B}(N,K)$ is at most one-dimensional.
Claim~\eqref{lm7.2} follows.
\end{proof}

\begin{corollary}\label{cr8}
For $I, J\in \mathcal{I}(A)^\mathrm{ind}$,  we have:
\begin{enumerate}[$($i$)$]
\item\label{cr8.1} Any non-zero homomorphism from $I$ to $J$ is injective.
\item\label{cr8.2}$\mathrm{Hom}_{A\text{-}A}(I,J)=\left\{\begin{array}{ll} \Bbbk\iota_{(I,J)},   & if\ I\subset J;\\ 0, & if\ I\not\subset J.
\end{array}\right.$
\end{enumerate}
\end{corollary}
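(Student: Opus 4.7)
The plan is to apply Lemma~\ref{lm7} with $B:=A\otimes_\Bbbk A^{\mathrm{op}}$, $M:={}_AA_A$, $N:=I$, $K:=J$. Both claims of Corollary~\ref{cr8} then drop out immediately: claim~\eqref{cr8.1} is Lemma~\ref{lm7}\eqref{lm7.1}, and claim~\eqref{cr8.2} is Lemma~\ref{lm7}\eqref{lm7.2}, provided we can verify the hypothesis that all composition multiplicities of ${}_AA_A$, viewed as a $B$-module, are at most one.

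The verification of this hypothesis is the only real content. The simple $A$-$A$-bimodules are of the form $S_{i,j}:=S_i\otimes_\Bbbk S_j^{\ast}$ for $i,j\in Q_0$, where $S_i$ is the simple one-dimensional left $A$-module at vertex $i$; concretely $S_{i,j}$ is one-dimensional with $\varepsilon_k$ acting as $\delta_{k,i}$ on the left, as $\delta_{k,j}$ on the right, and all arrows acting as zero on both sides. To read off composition factors of ${}_AA_A$, I would use the radical filtration
\begin{displaymath}
A\supset \mathrm{rad}(A)\supset \mathrm{rad}(A)^2\supset\cdots\supset 0,
\end{displaymath}
which is a filtration by subbimodules. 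For each $k$, the quotient $\mathrm{rad}(A)^k/\mathrm{rad}(A)^{k+1}$ is the $\Bbbk$-span of paths in $Q_k$ and is semisimple as an $A$-$A$-bimodule, with the path $w\in Q_k$ of source $j$ and target $i$ contributing one copy of $S_{i,j}$. By Lemma~\ref{lm1} there is at most one path between any ordered pair of vertices, so each $S_{i,j}$ appears at most once in total across all layers, which is exactly the required multiplicity bound.

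Granted this, I simply invoke Lemma~\ref{lm7}: an indecomposable ideal is by definition an indecomposable $A$-$A$-subbimodule of ${}_AA_A$, so \eqref{lm7.1} gives that any non-zero bimodule map $I\to J$ is injective, and \eqref{lm7.2} identifies $\mathrm{Hom}_{A\text{-}A}(I,J)$ with $\Bbbk\iota_{(I,J)}$ when $I\subset J$ and with $0$ otherwise. The only step that requires care is the hypothesis check above; everything else is a transfer of Lemma~\ref{lm7} from the abstract setting to the specific module ${}_AA_A$, and no new difficulty arises because tree-ness of $Q$ was already packaged into Lemma~\ref{lm1}.
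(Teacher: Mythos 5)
Your proposal is correct and follows essentially the same route as the paper: both reduce the corollary to Lemma~\ref{lm7} applied to $M={}_AA_A$ after checking that all composition multiplicities of the identity bimodule are at most one, which in both cases comes down to Lemma~\ref{lm1} (the paper checks this slightly more directly by noting $\dim\varepsilon_iA\varepsilon_j\leq 1$, whereas you run through the radical filtration, but this is the same observation). No gap.
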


\begin{proof}
By Lemma~\ref{lm1}, the quiver $Q$ contains at most one oriented path between each pair of vertices.
This means that $\varepsilon_{i}A\varepsilon_{j}$ is at most $1$-dimensional for all $i,j\in Q_0$,
that is, all composition multiplicities of $A$, as $A$-$A$-bimodule, are at most $1$.
Therefore the assertion of the corollary follows from Lemma~\ref{lm7}.
\end{proof}

Using Corollary~\ref{cr8}, we can determine the quiver $\mathcal{Q}^{(1)}$ for the underlying algebra of the principal
$2$-representation $\mathbf{P}_{\texttt{i}}$ of $\mathscr{D}_A$. The vertices of $\mathcal{Q}^{(1)}$
are given  by indecomposable subbimodules in ${}_AA_A$. For two indecomposable subbimodules
$I,J$ in ${}_AA_A$ there is exactly one arrow from $I$ to $J$ if $J\Subset I$
and there are no arrows otherwise
(note that arrows in the quiver go in the opposite direction than homomorphisms between the corresponding
projective modules). Furthermore, for any chains of indecomposable ideals
\begin{displaymath}
I_1\Subset I_2 \Subset\dots \Subset I_k\quad\text{ and }\quad
J_1\Subset J_2 \Subset\dots \Subset J_m
\end{displaymath}
such that $I_1=J_1$ and $I_k=J_m$, we have
\begin{displaymath}
\iota_{(I_{k-1},I_k)}\cdots\iota_{(I_2,I_3)}\iota_{(I_1,I_2)} =
\iota_{(J_{m-1},J_m)}\cdots\iota_{(J_2,J_3)}\iota_{(J_1,J_2)}.
\end{displaymath}
Therefore we have to impose all commutativity relations for $\mathcal{Q}^{(1)}$ which make sense. Since all
$\iota_{(I,J)}$ are injective, no other relations are necessary.
Explicit examples are given in Subsection \ref{s6.2}.

One corollary from this description is that, thanks to the commutativity relations, all
indecomposable projective modules over the quiver algebra of $\mathcal{Q}^{(1)}$ with imposed
commutativity relations are multiplicity free.

\subsection{Duflo involution of $\{[\mathrm{Dp}_{I}]\}$}\label{s3.4}

In this subsection we verify that $[\mathrm{Dp}_{J}]$ defined below is the abstract Duflo involution corresponding to  the left cell $\{[\mathrm{Dp}_{I}]\}$.

\begin{proposition}\label{pr9}
For $J\in \mathcal{I}(A)^{\mathrm{ind}}$, the homomorphism
\begin{equation}\label{eq3.1}
\left(\bigoplus_{J\text{ covers } X}P_{\mathrm{Dp}_{X}}\right)\overset{\mathbf{d}}{\rightarrow}P_{\mathrm{Dp}_{J}}
\end{equation}
where $\mathbf{d}$ is a $1$-row matrix with coefficients $\iota_{(X,J)}$,
gives a projective presentation of $L_{\mathrm{Dp}_{J}}$ in  $\overline{\mathbf{P}}_{\mathtt{i}}(\mathtt{i})$.
\end{proposition}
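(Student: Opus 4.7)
The plan is to establish that $\mathrm{coker}(\mathbf{d})\cong L_{\mathrm{Dp}_J}$ by identifying the image of $\mathbf{d}$ with the radical of $P_{\mathrm{Dp}_J}$; since $P_{\mathrm{Dp}_J}$ has $L_{\mathrm{Dp}_J}$ as its simple top by construction, this immediately delivers the desired projective presentation.

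First I would record the hom spaces between the indecomposable projectives in $\overline{\mathbf{P}}_\mathtt{i}(\mathtt{i})$. By the definition of the abelianization, morphisms between $P_{\mathrm{Dp}_I}=(0\to\mathrm{Dp}_I)$ and $P_{\mathrm{Dp}_J}=(0\to\mathrm{Dp}_J)$ are precisely the $2$-morphisms $\mathrm{Dp}_I\to\mathrm{Dp}_J$ in $\mathscr{D}_A$; these are natural transformations of tensoring functors and hence, by evaluation at $A$, coincide with bimodule homomorphisms $I\to J$. By Corollary~\ref{cr8} this Hom space equals $\Bbbk\iota_{(I,J)}$ when $I\subseteq J$ and vanishes otherwise. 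Combined with the multiplicity-freeness of the projectives noted at the end of Subsection~\ref{s3.3}, this shows that $P_{\mathrm{Dp}_J}$ has unique simple top $L_{\mathrm{Dp}_J}$ and that $\mathrm{rad}(P_{\mathrm{Dp}_J})$ is the sum of the images of the morphisms corresponding to $\iota_{(I,J)}$ for all $I\in\mathcal{I}(A)^{\mathrm{ind}}$ with $I\subsetneq J$.

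The main combinatorial step is then to factor each such inclusion through $\mathbf{d}$. Given $I\subsetneq J$ in $\mathcal{I}(A)^{\mathrm{ind}}$, the set $\{Y\in\mathcal{I}(A)^{\mathrm{ind}}:I\subseteq Y\subsetneq J\}$ is finite and nonempty (it contains $I$), so it has a maximal element $X$, which must satisfy $X\Subset J$ by maximality. The tautological identity $\iota_{(I,J)}=\iota_{(X,J)}\circ\iota_{(I,X)}$ in $\mathrm{Hom}_{A\text{-}A}(I,J)$ then shows that the morphism $P_{\mathrm{Dp}_I}\to P_{\mathrm{Dp}_J}$ corresponding to $\iota_{(I,J)}$ factors through the summand $P_{\mathrm{Dp}_X}$ of the domain of $\mathbf{d}$. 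Thus $\mathrm{im}(\mathbf{d})\supseteq\mathrm{rad}(P_{\mathrm{Dp}_J})$, and the reverse inclusion is trivial since each coefficient $\iota_{(X,J)}$ of $\mathbf{d}$ has $X\subsetneq J$; combining these gives $\mathrm{coker}(\mathbf{d})\cong L_{\mathrm{Dp}_J}$.

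I do not expect a serious obstacle: the only thing that could go wrong is a discrepancy between abstract composition of morphisms in $\overline{\mathbf{P}}_\mathtt{i}(\mathtt{i})$ and the concrete composition of subbimodule inclusions, but this is precisely what the commutativity relations defining $\mathcal{Q}^{(1)}$ in Subsection~\ref{s3.3} encode. Consequently the entire argument proceeds at the level of honest inclusion maps $\iota_{(-,-)}$ of subbimodules of $A$, which makes both well-definedness of $\mathbf{d}$ and the factorisation above automatic.
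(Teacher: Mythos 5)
Your proof is correct in substance and follows essentially the same strategy as the paper: identify $\mathrm{im}(\mathbf{d})$ with $\mathrm{rad}(P_{\mathrm{Dp}_{J}})$ by using Corollary~\ref{cr8} to see that every radical morphism into $P_{\mathrm{Dp}_{J}}$ is (a scalar multiple of) the inclusion of a proper subideal, factoring that inclusion through a maximal proper subideal, and invoking multiplicity-freeness of the projectives to conclude that the sum of the resulting images is exactly the radical.

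One imprecision is worth fixing. In \eqref{eq3.1} the sum runs over the ideals $X$ that $J$ covers in $\mathcal{I}(A)$, i.e.\ by Lemma~\ref{lm4} the codimension-one subideals of $J$, and these need not be indecomposable; the paper is careful to distinguish this covering relation from the ind-covering $\Subset$ on $\mathcal{I}(A)^{\mathrm{ind}}$. You instead take $X$ maximal in $\{Y\in\mathcal{I}(A)^{\mathrm{ind}}:\ I\subseteq Y\subsetneq J\}$, obtaining $X\Subset J$, and then assert that $P_{\mathrm{Dp}_{X}}$ is a summand of the domain of $\mathbf{d}$ --- which is not literally what \eqref{eq3.1} provides. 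The assertion is in fact true, but needs a line of justification: such an $X$ is contained in some codimension-one subideal $X'$ of $J$, and since ${}_AA_A$ is multiplicity-free the indecomposable $X$ must sit inside a single indecomposable direct summand of $X'$, which by $X\Subset J$ must equal $X$ itself; hence $P_{\mathrm{Dp}_{X}}$ is a direct summand of $P_{\mathrm{Dp}_{X'}}$ and the corresponding component of $\mathbf{d}$ restricts to $\iota_{(X,J)}$. Alternatively, and this is closer to what the paper does, take $X$ maximal among \emph{all} proper subideals of $J$ containing the image of the given radical morphism; then $J$ covers $X$ and the factorization lands directly in a summand of \eqref{eq3.1}. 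With either repair the argument is complete.
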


\begin{proof}
Let $I,J$ be indecomposable ideals in $A$ and $\varphi:I\to \mathrm{Rad}(J)$ be a non-zero map. Then $\varphi$
is injective by Corollary~\ref{cr8}\eqref{cr8.1} and its image is contained in a maximal subideal
$X$ of $J$. By maximality, we have that $J$ covers $X$. From the previous subsection we have that
$P_{\mathrm{Dp}_{J}}$ is multiplicity free. It follows that the sum of all subobjects of the
form $P_{\mathrm{Dp}_{X}}$, where $J$ covers $X$, coincides with the radical of
$P_{\mathrm{Dp}_{J}}$. The claim follows.
\end{proof}

Let $I$ be an indecomposable ideal of $A$ and $G(I)=\{u_1,u_2,\ldots,u_k\}$.
Denote by $s_{G(I)}$ the set of sources of all generators in $G(I)$. Set $J:=\langle \varepsilon_{i} |\ i\in s_{G(I)}\rangle$.

\begin{lemma}\label{lm10}
The ideal  $J$ is indecomposable,  $I\subset J$ and $IJ=I$.
\end{lemma}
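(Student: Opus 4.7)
The plan is to establish the three claims $I\subset J$, $IJ=I$, and the indecomposability of $J$ in turn. The first two rest on the identity $u=\varepsilon_{t(u)}u\varepsilon_{s(u)}$ valid for every path $u$, while the third is combinatorial and I intend to route through Lemma~\ref{lm5}.

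For $I\subset J$, each generator $u_\ell\in G(I)$ satisfies $u_\ell=u_\ell\varepsilon_{s(u_\ell)}$ with $s(u_\ell)\in s_{G(I)}$, hence $u_\ell\in A\varepsilon_{s(u_\ell)}A\subset J$; since $J$ is a two-sided ideal and $I=\langle G(I)\rangle$, this gives $I\subset J$. The inclusion $IJ\subset I$ is immediate since $J\subset A$ and $I$ is an ideal, and the reverse inclusion $I\subset IJ$ follows again by writing $u_\ell=u_\ell\cdot\varepsilon_{s(u_\ell)}\in I\cdot J$ for each generator, so that $G(I)\subset IJ$ and consequently $I=\langle G(I)\rangle\subset IJ$.

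For the indecomposability of $J$, the strategy is to apply Lemma~\ref{lm5} and show that the signed Hasse diagram of $Pa(J)$ is connected. Unwinding the generators, $J=\sum_{i\in s_{G(I)}}A\varepsilon_i A$, so $Pa(J)$ consists precisely of those paths in $Q^p$ that pass through at least one vertex of $s_{G(I)}$. The argument then splits in two steps: first, any $w\in Pa(J)$ passing through some $i\in s_{G(I)}$ can be joined to $\varepsilon_i$ inside $Pa(J)$ by iteratively stripping off either the leftmost or rightmost arrow of $w$, chosen so that the remaining path still passes through $i$ and hence stays in $Pa(J)$, each such step corresponding to a cover relation in the signed Hasse diagram; second, the vertices $\varepsilon_i$ for $i\in s_{G(I)}$ all lie in a single connected component, because the generators $u_1,\dots,u_k$ of $I$ are mutually connected in $Pa(I)\subset Pa(J)$ by applying Lemma~\ref{lm5} to the indecomposable ideal $I$, and the first step links each $u_\ell$ to $\varepsilon_{s(u_\ell)}$. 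The main subtlety I anticipate is making the first step precise: peeling arrows from the middle of $w$ would destroy the property of passing through $i$, and even peeling from the wrong end when $i$ is already an endpoint of $w$ would do so, so a short case analysis on the position of $i$ inside $w$ is needed, after which the remainder is routine.
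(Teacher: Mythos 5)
Your argument is correct, and for the indecomposability claim it takes a genuinely different route from the paper. The paper treats $I\subset J$ and $IJ=I$ exactly as you do (it merely says they follow from the definitions), but for indecomposability it argues algebraically: if $J=J'\oplus J''$ with both summands proper, then $I=IJ=IJ'\oplus IJ''$, so indecomposability of $I$ forces $IJ'=0$ or $IJ''=0$; yet each of $J'$, $J''$ contains some generator $\varepsilon_j$ with $j\in s_{G(I)}$ (by Lemma~\ref{lm3}, $G(J)=G(J')\cup G(J'')$), whence $u_i=u_i\varepsilon_j$ lies in $IJ'$ (respectively $IJ''$) for any generator $u_i$ of $I$ with source $j$ --- a contradiction. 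That argument is shorter and exploits the identity $IJ=I$ that you have just proved, turning the indecomposability of $J$ into a direct consequence of the indecomposability of $I$. Your route instead goes through Lemma~\ref{lm5} and the combinatorics of the signed Hasse diagram: you correctly identify $Pa(J)$ as the set of paths passing through some vertex of $s_{G(I)}$, connect every such path to the corresponding $\varepsilon_i$ by peeling arrows from the appropriate outer end (your noted case analysis is exactly the point that needs care, and it does go through since every intermediate path still passes through $i$ and $Pa(J)$ is a full subdiagram of the diagram of $Q^p$), and then link the various $\varepsilon_i$ together through the connected diagram of $Pa(I)\subset Pa(J)$. Both proofs are valid; the paper's is more economical, while yours gives an explicit combinatorial description of $Pa(J)$ and of how its Hasse diagram is assembled, which is in the spirit of Section~\ref{s2.2}.
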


\begin{proof}
That  $I\subset J$ and $IJ=I$ follows directly from the definitions.

Note that $G(J)=\{\varepsilon_{i} |\ i\in s_{G(I)}\}$. Assume that $J=J'\oplus J''$, where $J'$, $J''$ are two proper
subideals in $J$. By Lemma~\ref{lm3} we have $G(J)=G(J')\cup G(J'')$, $G(J')=G(J)\cap J'$ and $G(J'')=G(J)\cap J''$.

Since $I=IJ=IJ'\oplus IJ''$ and $I$ is indecomposable, we get $IJ'=0$ (or $IJ''=0$). Recalling that $J''$ is proper, there exists $\varepsilon_j\in J'$ for some $ j\in s_{G(I)}$.
Thus those $u_i$'s, whose source is $j$, lie in $IJ'$ as $u_i\in I,\ \varepsilon_j\in J'$. This implies that $IJ'\neq0$ and one can also obtain $IJ''\neq0$ similarly, which is a contradiction.
\end{proof}

\begin{lemma}\label{lm11}
Given $ I,J $ as above, we have $\mathrm{Dp}_{I}L_{\mathrm{Dp}_{J}}\neq 0$.
\end{lemma}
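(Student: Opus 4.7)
The plan is to derive the claim from Proposition~\ref{pr9} by applying the right-exact functor $\mathrm{Dp}_I$ to the projective presentation of $L_{\mathrm{Dp}_J}$ and then reducing to a combinatorial statement about ideal products. Applying $\mathrm{Dp}_I$ to \eqref{eq3.1} and using the identity $\mathrm{Dp}_I\circ\mathrm{Dp}_K\cong\mathrm{Dp}_{IK}$ (noted in the proof of Lemma~\ref{lm6}), we arrive at a right-exact sequence
\begin{displaymath}
\bigoplus_{X:\, J\text{ covers }X} P_{\mathrm{Dp}_{IX}} \longrightarrow P_{\mathrm{Dp}_{IJ}} \longrightarrow \mathrm{Dp}_I L_{\mathrm{Dp}_J} \longrightarrow 0,
\end{displaymath}
whose components are induced by the inclusions $\iota_{(IX,IJ)}$. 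By Lemma~\ref{lm10} we have $IJ=I$, so the middle term is the indecomposable projective $P_{\mathrm{Dp}_I}$.

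Since $P_{\mathrm{Dp}_I}$ is local with simple top $L_{\mathrm{Dp}_I}$, the task reduces to showing that the displayed map is not surjective. By Nakayama's lemma this is equivalent to the assertion that no indecomposable summand of any $\mathrm{Dp}_{IX}$ (appearing in the sum) is isomorphic to $\mathrm{Dp}_I$; since $IX\subseteq IJ=I$ and $I$ itself is indecomposable, this in turn reduces to showing $IX\subsetneq I$ for every $X\subsetneq J$ appearing in the sum.

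The crux of the argument, and the step I expect to require the real work, is the following characterization: for every $u\in G(I)$ and every ideal $X$ of $A$, one has $u\in IX$ if and only if $\varepsilon_{s(u)}\in X$. The ``if'' direction is immediate from $u=u\cdot\varepsilon_{s(u)}$. For the ``only if'' direction, I would use that $IX$ is spanned over $\Bbbk$ by products $pq$ with $p\in I\cap Q^p$ and $q\in X\cap Q^p$, so $u\in IX$ forces $u=pq$ for some such paths $p,q$; then $p\preceq u$, and minimality of $u$ as an element of $I\cap Q^p$ (which is built into the definition of $G(I)$, see Lemma~\ref{lm3}) forces $p=u$ and hence $q=\varepsilon_{s(u)}\in X$. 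The uniqueness of paths between vertices in a tree quiver (Lemma~\ref{lm1}) is what makes this clean factorization available in the first place.

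Granting the characterization, the assumption $IX=I$ would force $\varepsilon_{s(u)}\in X$ for every $u\in G(I)$, hence $\varepsilon_i\in X$ for every $i\in s_{G(I)}$, hence $J=\langle\varepsilon_i\mid i\in s_{G(I)}\rangle\subseteq X$; combined with $X\subsetneq J$ this is a contradiction. Consequently $IX\subsetneq I$ for every $X$ in the sum, the map fails to be surjective, and $\mathrm{Dp}_I L_{\mathrm{Dp}_J}\neq 0$, as required.
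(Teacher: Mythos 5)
Your proposal is correct and follows essentially the same route as the paper: apply $\mathrm{Dp}_I$ to the presentation from Proposition~\ref{pr9}, use $IJ=I$ from Lemma~\ref{lm10}, and show $IX\subsetneq I$ for each $X$ in the sum by factoring a generator $u\in G(I)$ as a product of a path in $I$ and a path in $X$ and invoking the minimality (anti-chain) property of $G(I)$ to force the second factor to be $\varepsilon_{s(u)}$. Your packaging of this as the equivalence $u\in IX\Leftrightarrow\varepsilon_{s(u)}\in X$ is a clean restatement of the paper's computation, not a different argument.
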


\begin{proof}
Assume that $X$ is an ideal covered by $J$. Then there exists exactly one $\varepsilon_{j}$ with $j\in s_{G(I)}$,
which is not contained in $X$. Assume that
\begin{displaymath}
\{u_{i_1},u_{i_2},\dots,u_{i_m}\}, \qquad\text{where}\qquad i_1<i_2<\dots<i_m,
\end{displaymath}
is the list of all elements in $G(I)$ with source $j$. This list is not empty.

We claim that all $u_{i_t}$'s do not belong to $IX$. Indeed, if $u_{i_t}\in IX$, then $u_{i_t}=au_lbx$
where $a,b\in Q^p$ and $x\in Pa(X)$. Since different elements in $G(I)$ are not comparable,
we get $l=i_t$ and $\mathfrak{l}(a)=\mathfrak{l}(b)=\mathfrak{l}(x)=0$. This forces
$x=\varepsilon_{j}$, a contradiction. Thus we have $IX\subsetneq I$.

Multiplying \eqref{eq3.1} with $I$ from the left, we get
\begin{equation}\label{eq3.2}
\bigoplus_{J\text{ covers } X}P_{\mathrm{Dp}_{IX}}\overset{\mathbf{d}'}{\rightarrow}P_{\mathrm{Dp}_{I}},
\end{equation}
where $\mathbf{d}'$ is a $1$-row matrix with coefficients $\iota_{(IX,I)}$. Since each
$IX\neq I$, the object in $\overline{\mathbf{P}}_{\mathtt{i}}(\mathtt{i})$ corresponding to \eqref{eq3.2}
is non-zero. This implies the claim of the lemma.
\end{proof}

\begin{lemma}\label{lm12}
For any indecomposable ideal $J'\not\subset J$, we have $\mathrm{Dp}_{I}L_{\mathrm{Dp}_{{J'}}}=0$.
\end{lemma}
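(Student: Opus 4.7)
The plan is to apply the functor $\mathrm{Dp}_I$ to the projective presentation of $L_{\mathrm{Dp}_{J'}}$ supplied by Proposition~\ref{pr9}, obtaining a two-term presentation
\begin{displaymath}
\bigoplus_{J'\text{ covers }X} P_{\mathrm{Dp}_{IX}} \longrightarrow P_{\mathrm{Dp}_{IJ'}}
\end{displaymath}
of $\mathrm{Dp}_I L_{\mathrm{Dp}_{J'}}$ in $\overline{\mathbf{P}}_{\mathtt{i}}(\mathtt{i})$. My strategy is to locate a single summand index $X_k$ for which $IX_k = IJ'$; the induced map $P_{\mathrm{Dp}_{IX_k}} \to P_{\mathrm{Dp}_{IJ'}}$ is then an isomorphism, so the total map is an epimorphism and its cokernel vanishes, yielding $\mathrm{Dp}_I L_{\mathrm{Dp}_{J'}} = 0$.

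To construct $X_k$, I would use the hypothesis $J' \not\subset J$ to choose a path $v \in Pa(J') \setminus Pa(J)$. Since $Pa(J)$ is exactly the set of paths traversing some vertex of $s_{G(I)}$, this $v$ avoids $s_{G(I)}$ altogether. Picking $v_k \in G(J')$ with $v_k \preceq v$, the path $v_k$ is a subpath of $v$ and therefore also avoids $s_{G(I)}$; in particular $t(v_k) \notin s_{G(I)}$. I then define $X_k$ by $Pa(X_k) = Pa(J') \setminus \{v_k\}$; by Lemma~\ref{lm4} this $X_k$ is covered by $J'$ in $\mathcal{I}(A)$, so it appears in the sum above.

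The technical core is the equality $IX_k = IJ'$. The inclusion $IX_k \subset IJ'$ is automatic, as is $I\langle v_l\rangle \subset IX_k$ for each generator $v_l \neq v_k$, so everything reduces to showing $I\langle v_k\rangle \subset IX_k$. I would take an arbitrary path $w \in I\langle v_k\rangle$ and write it as $w = a\,u_l\,f\,v_k\,d$ with $u_l \in G(I)$, paths $a,d$ (possibly trivial), and $f$ the unique path from $t(v_k)$ to $s(u_l)$ (uniqueness by Lemma~\ref{lm1}). Because $t(v_k) \notin s_{G(I)}$ while $s(u_l) \in s_{G(I)}$, $f$ must be non-trivial and so has a first arrow $a_1$ with $s(a_1) = t(v_k)$; writing $f = f'a_1$ yields $w = (a\,u_l\,f')\cdot(a_1 v_k\,d)$, where $a\,u_l\,f' \in I$ and $a_1 v_k \in Pa(X_k)$ (since $a_1 v_k \succ v_k$ lies in the upper set $Pa(J')$ and differs from $v_k$ in length). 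Hence $w \in I\langle a_1 v_k\rangle \subset IX_k$. The main obstacle is exactly this non-triviality of the bridging path $f$: it is what lets an arrow be absorbed into $v_k$ to produce a proper cover lying inside $X_k$, and it is precisely what the hypothesis $J' \not\subset J$ guarantees.
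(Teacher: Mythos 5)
Your proposal is correct and follows essentially the same route as the paper: apply $\mathrm{Dp}_I$ to the presentation from Proposition~\ref{pr9}, take the cover $X'$ of $J'$ obtained by deleting a generator of $J'$ lying outside $J$, and show $IX'=IJ'$ by observing that the bridging path from the target of that generator to a source in $s_{G(I)}$ is non-trivial, so its first arrow can be absorbed to land in $IX'$. The only (harmless) cosmetic difference is that you descend from an arbitrary path in $Pa(J')\setminus Pa(J)$ to a generator, whereas the paper picks such a generator directly.
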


\begin{proof}
Let $G(J')=\{w_1,w_2,\ldots,w_l\}$. Then there exists $w_j\not\in J$ for some $j$. Moreover, since $J$ is generated by
elements of length zero, neither $s(w_j)$ nor $t(w_j)$ are in $s_{G(I)}$. Hence $u_iw_j=0$ for all $1\leq i\leq k$.

Consider the following projective  presentation of $L_{\mathrm{Dp}_{J'}}$ in
$\overline{\mathbf{P}}_{\mathtt{i}}(\mathtt{i})$:
\begin{displaymath}
\bigoplus_{J'\text{ covers } X'}P_{\mathrm{Dp}_{X'}}\overset{\mathbf{c}}{\rightarrow}P_{\mathrm{Dp}_{J'}}
\end{displaymath}
where $\mathbf{c}$ is a $1$-row matrix with coefficients $\iota_{(X',J')}$. Multiplying this from the left with $I$,
we get
\begin{equation}\label{eq3.3}
\bigoplus_{J'\text{ covers } X'}P_{\mathrm{Dp}_{IX'}}\overset{\mathbf{c}'}{\rightarrow}P_{\mathrm{Dp}_{IJ'}}
\end{equation}
where $\mathbf{c}'$ is a $1$-row matrix with coefficients $\iota_{(IX',IJ')}$.

The ideal $IJ'$ is generated by the set $\{u_iaw_t|\ 1\leq i\leq k, 1\leq t\leq l, a\in Q^p\}$. If there exists some $X'$
covered by $J'$ such that all these $u_iaw_t$'s lie in $IX'$, then $IX'\supset IJ'$. It is clear that $IX'\subset IJ'$,
which implies that $IX'=IJ'$. This means that the corresponding map $\iota_{(IX',IJ')}$ in \eqref{eq3.3} is the identity map and hence \eqref{eq3.3} represents the zero object in $\overline{\mathbf{P}}_{\mathtt{i}}(\mathtt{i})$.

It remains to show that such a $X'$ as desired in the previous paragraph exists. In fact, we show that we can take $X'$ as
the ideal covered by $J'$ which does not contain $w_j$. Then $w_j$ is the only path
in $J'$ which is not in $X'$ by Lemma~\ref{lm4}. Let us compare $ IX' $ with $ IJ' $.
Clearly, $u_iaw_t\in IX'$ for all $t\neq j$. If $u_iaw_j=0$ for $1\leq i\leq k$ and all $a\in Q^p$, then we are done.
If there exists some $b\in Q^p$ and $i\in\{1,2,\dots,k\}$ such that $u_ibw_j\neq0$, then $\mathfrak{l}(b)>0$ due to $u_iw_j=0$. Write $b$ as a composition of arrows, say, $b=b_lb_{l-1}\ldots b_2b_1$. Then $b_1w_j\in X'$ and
$u_ibw_j\in IX'$. Thus $IX'$ contains all $u_iaw_t$'s and the proof is complete.
\end{proof}

\begin{theorem}\label{thm13}
Given $I, J$ as above, the element $\mathrm{Dp}_{J}$ is the Duflo involution corresponding to $\{[\mathrm{Dp}_{I}]\}$.
\end{theorem}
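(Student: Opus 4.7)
The plan is to verify the three bullet conditions of the abstract Duflo involution of Subsection~\ref{s2.7} with the candidate $G = \mathrm{Dp}_J$. Since $\mathbb{1}_{\mathtt{i}} = \mathrm{Dp}_A$ in $\mathscr{D}_A$, we have $P_{\mathbb{1}_{\mathtt{i}}} = P_{\mathrm{Dp}_A}$. The inclusion $\iota_{(J,A)}$ corresponds via Yoneda to a morphism $P_{\mathrm{Dp}_J} \to P_{\mathrm{Dp}_A}$ in $\overline{\mathbf{P}}_{\mathtt{i}}(\mathtt{i})$, and this morphism is injective by Corollary~\ref{cr8}\eqref{cr8.1}. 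I would define $K$ as its image; then $K \cong P_{\mathrm{Dp}_J}$ has simple top $L_{\mathrm{Dp}_J}$, which is the second bullet of the definition. The third bullet, $\mathrm{Dp}_I L_{\mathrm{Dp}_J} \neq 0$, is exactly Lemma~\ref{lm11}.

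For the first bullet, I would argue that every simple subquotient $L_H$ of $P_{\mathbb{1}_{\mathtt{i}}}/K$ is of the form $L_{\mathrm{Dp}_{J'}}$ for some indecomposable ideal $J' \not\subset J$, and then invoke Lemma~\ref{lm12}. Subsection~\ref{s3.3} identified the algebra underlying $\mathscr{D}_A(\mathtt{i},\mathtt{i})$ as the path algebra of the quiver $\mathcal{Q}^{(1)}$ modulo commutativity relations, so every indecomposable projective in $\overline{\mathbf{P}}_{\mathtt{i}}(\mathtt{i})$ is multiplicity-free. Thus $P_{\mathrm{Dp}_A}$ contains each $L_{\mathrm{Dp}_{J''}}$ exactly once as $J''$ ranges over indecomposable ideals, and $K \cong P_{\mathrm{Dp}_J}$ contains exactly those $L_{\mathrm{Dp}_{J''}}$ with $J''$ indecomposable and $J'' \subset J$ (these being the vertices of $\mathcal{Q}^{(1)}$ reachable from $\mathrm{Dp}_J$, which by the arrow convention in $\mathcal{Q}^{(1)}$ and Corollary~\ref{cr8} are precisely the indecomposable subideals of $J$). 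The composition factors of $P_{\mathbb{1}_{\mathtt{i}}}/K$ are therefore the remaining $L_{\mathrm{Dp}_{J'}}$ with $J' \not\subset J$, and Lemma~\ref{lm12} kills each of them when acted on by $\mathrm{Dp}_I$.

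The principal obstacle is the correct identification of the composition factors of $K$ inside $P_{\mathbb{1}_{\mathtt{i}}}$; this is where the combinatorial groundwork of Subsections~\ref{s2.2} and~\ref{s3.3} pays off, as both the multiplicity-freeness (via the commutativity-only relations in $\mathcal{Q}^{(1)}$) and the Hom-space classification between indecomposable ideals (Corollary~\ref{cr8}) are needed to pin down precisely which simples lie in $K$. Once that identification is in place, the three Duflo conditions follow by a routine combination of Corollary~\ref{cr8}, Lemma~\ref{lm11} and Lemma~\ref{lm12}, and the theorem is proved.
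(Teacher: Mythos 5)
Your proposal is correct and follows essentially the same route as the paper: take $K$ to be the image of $P_{\mathrm{Dp}_J}$ in $P_{\mathbb{1}_{\mathtt{i}}}=P_{\mathrm{Dp}_A}$, get the top condition from that identification, the non-annihilation from Lemma~\ref{lm11}, and the annihilation of the quotient's composition factors from Lemma~\ref{lm12}. The only difference is that you spell out, via the multiplicity-freeness of the projectives over $\mathcal{Q}^{(1)}$ and Corollary~\ref{cr8}, exactly which simples $L_{\mathrm{Dp}_{J'}}$ survive in $P_{\mathbb{1}_{\mathtt{i}}}/K$ — a step the paper leaves implicit.
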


\begin{proof}
By Lemma~\ref{lm11}  and Lemma~\ref{lm12}, we observe that $ P_{\mathrm{Dp}_{J}} $ is not annihilated by
$\mathrm{Dp}_{I} $ but every simple subquotient $P_{\mathbb{1}_\texttt{i}}/P_{\mathrm{Dp}_{J}}$
is annihilated by $ \mathrm{Dp}_{I} $. Thus with respect to the left cell
$\{[\mathrm{Dp}_{I}]\}$, the minimal submodule $K_\mathcal{L}$ of $P_{\mathbb{1}_\texttt{i}}$ in
$\overline{\mathbf{P}}_{\texttt{i}}$ is $P_{\mathrm{Dp}_{J}}$, which has a simple top
$L_{\mathrm{Dp}_{J}}$. This completes the proof of the theorem.
\end{proof}

\section{Duflo involutions for projective bimodules}\label{s4}

\subsection{The $2$-category of projective bimodules}\label{s4.1}

Let $B$ be any basic, connected, finite dimensional $\Bbbk$-algebra and $\mathcal{B}$ be a small category
equivalent to $B$-mod. Following \cite[Subsection~7.3]{MM1}, define the finitary $2$-category $\mathscr{C}_B$ to have
\begin{itemize}
\item one object $\mathtt{i}$ (identified with the category $\mathcal{B}$);
\item as $1$-morphisms, the functors given, up to equivalence with $B$-mod, by functors from the additive closure of ${}_BB\otimes_\Bbbk B_B\otimes_B{}_-$ and the identity functor $\mathrm{Id}_{\mathcal{B}}$;
\item as $2$-morphisms, all  natural transformations of functors.
\end{itemize}

Let $\{e_1, e_2,\dots, e_n\}$ be a complete and irredundant set of primitive and pairwise orthogonal idempotents in $B$.
For $i,j\in\{1,2,\dots,n\}$, denoted by $Y_{ij}$ the projective $B$-$B$-bimodule $Be_i\otimes_\Bbbk e_jB$
and by $G_{ij}$ the $1$-morphism in $\mathscr{C}_B$ corresponding to tensoring with $Y_{ij}$.
Let $L_{ij}$ be the simple top of $Y_{ij}$ (as a $B$-$B$-bimodule). By \cite[Subsection~7.3]{MM1}, we have
\begin{displaymath}
G_{ij}G_{st}=G_{it}^{\oplus \dim(e_jBe_s)}.
\end{displaymath}
For $1\leq j\leq n$, the set $\mathcal{L}_j:=\{[Y_{ij}]|\ 1\leq i\leq n\}$ is a left cell. For $1\leq i\leq n$, the set $\mathcal{R}_i:=\{[Y_{ij}]|\ 1\leq j\leq n\}$ is a right cell.
The set $\mathcal{J}:=\{[Y_{ij}]|\ 1\leq i,j\leq n\}$ is a two-sided cell which is maximal with respect to $\geq_{\mathcal{J}}$. If $B$ is not simple, then, apart from $\mathcal{J}$, there is only one more
two-sided cell and it consists of only one element, namely, $H:=\mathrm{Id}_{\mathcal{B}}$.
If $B$ is simple, then $n=1$ and $G_{11}\cong \mathrm{Id}_{\mathcal{B}}$.

\subsection{Duflo involutions for projective bimodules}\label{s4.2}

\begin{theorem}\label{thm14}
For $1\leq j\leq n$, the element $G_{jj}$ is the Duflo involution in $\mathcal{L}_j$.
\end{theorem}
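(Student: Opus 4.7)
My plan is to verify that $G_{jj}$ satisfies the three bullet points from the definition of Duflo involution in Subsection~\ref{s2.7} for the left cell $\mathcal{L}_j$. The candidate submodule will be $K:=\mathrm{Im}(\Phi_j)\subset P_{\mathbb{1}_\mathtt{i}}$, where $\Phi_j\colon P_{G_{jj}}\to P_{\mathbb{1}_\mathtt{i}}$ is the morphism in $\overline{\mathbf{P}}_\mathtt{i}(\mathtt{i})$ corresponding, under the identification $\mathrm{Hom}_{\mathscr{C}_B}(G_{jj},\mathbb{1}_\mathtt{i})\cong\mathrm{Hom}_{B\text{-}B}(Y_{jj},B)\cong e_jBe_j$, to the idempotent $e_j$; equivalently, $\Phi_j$ is induced by the multiplication map $\mu_j\colon Be_j\otimes_\Bbbk e_jB\to B$, $x\otimes y\mapsto xy$, whose image is the two-sided ideal $Be_jB$.

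First, I would check that $K$ has simple top $L_{G_{jj}}$. Since $B$ is basic and $e_j$ is primitive, the algebra $e_jBe_j$ is local with identity $e_j$, and hence $e_j$ is not in its radical; therefore $\Phi_j$ is not in the radical of $\mathrm{Hom}(P_{G_{jj}},P_{\mathbb{1}_\mathtt{i}})$, and $K$ is cyclic with the same simple top as $P_{G_{jj}}$, namely $L_{G_{jj}}$.

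Second, I would show that $G_{ij}$ annihilates every simple subquotient of $P_{\mathbb{1}_\mathtt{i}}/K$ for every $i$. Because $Y_{ij}=Be_i\otimes e_jB$ is projective, hence flat, as a right $B$-module, the functor $G_{ij}$ on $\overline{\mathbf{P}}_\mathtt{i}(\mathtt{i})$ is exact. Applying it to $0\to K\to P_{\mathbb{1}_\mathtt{i}}\to P_{\mathbb{1}_\mathtt{i}}/K\to 0$, the claim reduces to the surjectivity of $G_{ij}\Phi_j\colon G_{ij}P_{G_{jj}}\cong P_{G_{ij}}^{\oplus \dim e_jBe_j}\to G_{ij}P_{\mathbb{1}_\mathtt{i}}\cong P_{G_{ij}}$. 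At the bimodule level this is $Y_{ij}\otimes_B\mu_j\colon Y_{ij}\otimes_B Y_{jj}\to Y_{ij}$, and using the identity $e_jB\otimes_B Be_jB=e_jBe_jB=e_jB$ (which holds because $e_j\in e_jBe_j$), the image is all of $Y_{ij}$, so $G_{ij}(P_{\mathbb{1}_\mathtt{i}}/K)=0$.

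The third bullet, $G_{ij}L_{G_{jj}}\neq 0$ for all $i$, is the main obstacle. My plan is to argue by contradiction: assume $G_{ij}L_{G_{jj}}=0$. Applying the exact functor $G_{ij}$ to $0\to\mathrm{Rad}(K)\to K\to L_{G_{jj}}\to 0$ and combining with the identity $G_{ij}K=P_{G_{ij}}$ from the previous step forces $G_{ij}\mathrm{Rad}(K)=P_{G_{ij}}$. I would then enumerate the composition factors of $\mathrm{Rad}(K)=\Phi_j(\mathrm{Rad}(P_{G_{jj}}))$ using the explicit formulae $[K\colon L_{G_{st}}]=\dim e_s(Be_jB)e_t$ and $[K\colon L_{\mathbb{1}_\mathtt{i}}]=\dim(Z(B)\cap Be_jB)$, which follow by identifying $\mathrm{Hom}(P_F,K)$ with compositions $F\to G_{jj}\xrightarrow{\mu_j}\mathbb{1}_\mathtt{i}$. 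Each such factor must then be shown to be annihilated by $G_{ij}$: the $L_{\mathbb{1}_\mathtt{i}}$-factors are handled by the second bullet; the $L_{G_{st}}$-factors with $e_sBe_t\not\subseteq Be_jB$ appear also in $P_{\mathbb{1}_\mathtt{i}}/K$ and are therefore killed by $G_{ij}$; the remaining $L_{G_{st}}$-factors with $e_sBe_t\subseteq Be_jB$ require a bimodule-idempotent analysis, which I expect to be the most delicate step. Exactness of $G_{ij}$ then yields $G_{ij}\mathrm{Rad}(K)=0$, contradicting $G_{ij}\mathrm{Rad}(K)=P_{G_{ij}}\neq 0$, and completing the proof.
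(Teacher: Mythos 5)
Your choice of $K=\mathrm{Im}(\mu_j)=Be_jB$ and the verification that it is cyclic with simple top $L_{G_{jj}}$ agree with the paper. The problems are in the other two bullets. The exactness of $G_{ij}$ on $\overline{\mathbf{P}}_{\mathtt{i}}(\mathtt{i})$ is not justified: flatness of $Y_{ij}$ as a right $B$-module gives exactness of $Y_{ij}\otimes_B{}_-$ on $B$-mod-$B$, but $\overline{\mathbf{P}}_{\mathtt{i}}(\mathtt{i})$ is the module category over $\mathrm{End}\bigl(\mathbb{1}_{\mathtt{i}}\oplus\bigoplus_{s,t}G_{st}\bigr)$, not the bimodule category, and the induced action of a $1$-morphism there is a priori only right exact. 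The paper sidesteps this: it lists the simples that can occur in $P_{\mathbb{1}_{\mathtt{i}}}/K$ (namely $L_H$ and the $L_{G_{st}}$ with $s,t\neq j$) and kills each one separately by applying $Y_{jj}\otimes_B{}_-$ to an explicit projective presentation, using that a surjection between projective bimodules splits and hence represents the zero object in the abelianization; only right exactness is used.

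The third step is where your plan genuinely fails. For the contradiction you need $G_{ij}$ to annihilate every composition factor of $\mathrm{Rad}(K)$, but by your own multiplicity formula $[K:L_{G_{st}}]=\dim e_s(Be_jB)e_t$, the radical of $K=Be_jB$ contains $L_{G_{jt}}$ whenever $e_jBe_t\neq 0$ for some $t\neq j$, and such a factor is \emph{not} annihilated: $Y_{ij}\otimes_B L_{jt}\cong Be_i\otimes_{\Bbbk}e_jL_{jt}\neq 0$. Already for $B=\Bbbk(1\to 2)$ and $j=2$ one has $\mathrm{Rad}(Be_2B)\cong L_{21}$ and $G_{i2}L_{G_{21}}\neq 0$, so $G_{ij}\mathrm{Rad}(K)\neq 0$ and no contradiction can be extracted along these lines. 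The paper's argument for this bullet is a short direct computation which you should adopt instead: take a projective presentation $X\to Y_{jj}\to L_{jj}\to 0$ of the simple bimodule $L_{jj}$, apply $Y_{ij}\otimes_B{}_-$; the result is a map of projective bimodules whose cokernel $Y_{ij}\otimes_B L_{jj}$ is nonzero, hence the corresponding object of $\overline{\mathbf{P}}_{\mathtt{i}}(\mathtt{i})$ is nonzero and $G_{ij}L_{G_{jj}}\neq 0$.
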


\begin{proof}
First we claim that $G_{jj} L_{H}=0$. For $s,t\in\{1,2,\dots,n\}$, let us choose a basis
$\varphi_{1}^{(s,t)},\varphi_{2}^{(s,t)},\dots,\varphi_{k_{st}}^{(s,t)}$ in
$\mathrm{Hom}_{B\text{-}B}(Y_{st},B)$. We also choose some basis $\psi_1,\psi_2,\dots,\psi_k$ in
$\mathrm{Hom}_{B\text{-}B}(B,\mathrm{Rad}(B))$. Then the map
\begin{equation}\label{eq4.1}
\bigoplus_{s,t=1}^n\bigoplus_{l=1}^{k_{st}} Y_{st}^{(l)}
\oplus \bigoplus_{l=1}^{k} B^{(l)}\overset{\Phi}{\longrightarrow} B,
\end{equation}
where $\Phi$ is the direct sum of all $\varphi_{l}^{(s,t)}$ and all $\psi_l$, corresponding to a
projective presentation of $L_{H}$. Note that the map $\Phi$ in \eqref{eq4.1} is surjective in $B$-mod-$B$, since $\bigoplus_{s,t} Y_{st}$ is a projective generator of the
latter category. Applying $Y_{jj}\otimes_{B}{}_-$ to \eqref{eq4.1}, we get a surjective map
\begin{equation}\label{eq4.2}
\bigoplus_{s,t=1}^n\bigoplus_{l=1}^{k_{st}} Y_{jj}\otimes_B Y_{st}^{(l)}
\oplus \bigoplus_{l=1}^{k} Y_{jj}^{(l)}\overset{Y_{jj}\otimes_B \Phi}{\longrightarrow} Y_{jj}.
\end{equation}
As $Y_{jj}$ is a projective $B$-$B$-bimodule, \eqref{eq4.2} splits and hence realizes the
zero object in the abelianization $\overline{\mathbf{P}}_{\mathtt{i}}$. Hence $G_{jj} L_{H}=0$.

Next we claim that $G_{jj} L_{G_{st}}=0$ for all $s,t\in \{1,2,\dots,n\}$ such that  $s\neq j$.
The projective presentation of $L_{G_{st}}$ is given by a projective presentation
\begin{displaymath}
X\to Y_{st}\tto L_{st}
\end{displaymath}
of $L_{st}$ in $B$-mod-$B$. Applying $Y_{jj}\otimes_{B}{}_-$ to this presentation,
and observing that $Y_{jj}\otimes_{B}L_{st}=0$ if $j\neq s$,
similarly to the previous paragraph it follows that $G_{jj} L_{G_{st}}=0$.

Last we claim that $G_{jj} L_{G_{jj}}\neq 0$.
The projective presentation of $L_{G_{jj}}$ is given by a projective presentation
\begin{displaymath}
X\to Y_{jj}\tto L_{jj}
\end{displaymath}
of $L_{jj}$ in $B$-mod-$B$. Applying $Y_{jj}\otimes_{B}{}_-$ to this presentation,
we get
\begin{displaymath}
Y_{jj}\otimes_{B} X\to Y_{jj}\otimes_{B} Y_{jj}\tto Y_{jj}\otimes_{B} L_{jj}.
\end{displaymath}
Since both $Y_{jj}\otimes_{B} X$ and $Y_{jj}\otimes_{B} Y_{jj}$ are projective bimodules,
the observation that we have $Y_{jj}\otimes_{B}L_{jj}\neq 0$ implies that $G_{jj} L_{G_{jj}}\neq 0$.

Consider now the subbimodule $Be_jB\subset B$. Then all composition subquotients of $B/Be_jB$
have the form $L_{st}$ for some $s,t\in \{1,2,\dots,n\}$ such that $s,t\neq j$.
We also have a surjective $B$-$B$-bimodule homomorphism from $Be_j\otimes_\Bbbk e_jB$ to $Be_jB$
sending $e_j\otimes e_j$ to $e_j$. This means that $Be_jB$ has simple top and this top is isomorphic
to $L_{jj}$. Comparing all these with the definition of a Duflo involution, we conclude that
$G_{jj}$ is indeed the Duflo involution in the cell $\mathcal{L}_j$.
\end{proof}

\subsection{The quiver of the principal $2$-representation}\label{s4.3}

It is easy to describe a major part of the quiver for the algebra underlying the principal
$2$-representation $\mathbf{P}_{\mathtt{i}}$ of $\mathscr{C}_B$. As $B$ is basic, it is isomorphic to
the quotient of the path algebra of some quiver $Q$ modulo an admissible ideal $I$.
Then the endomorphism algebra $\bigoplus_{s,t=1}^n Y_{st}$ is isomorphic to $B\otimes_{\Bbbk}B^{\mathrm{op}}$
and hence is given by the quiver $Q\times Q^{\mathrm{op}}$ modulo the ideal generated by
$I$, the opposite  of $I$ and by all possible commutativity relations between elements of $Q$ and
$Q^{\mathrm{op}}$.

Further, any homomorphism from a projective bimodule to the identity bimodule factors through the
surjection
\begin{displaymath}
\bigoplus_{j=1}^n Y_{jj}\to B,
\end{displaymath}
where $Y_{jj}\to B$ is given by sending $e_j\otimes e_j$ to $e_j$. This means that the quiver
for the algebra underlying the principal $2$-representation $\mathbf{P}_{\mathtt{i}}$ of $\mathscr{C}_B$
contains one arrow from the vertex corresponding to $L_{H}$ to the vertex corresponding to $L_{G_{jj}}$,
for each $j$ and no arrows to any of $L_{G_{st}}$ for $s\neq t$.

We do not know how to determine arrows from $L_{H}$ to itself or from $L_{G_{st}}$ to $L_H$ in the general
case. In some special case this is possible, see, for example, Subsection~\ref{s5.5}.

\section{A new finitary $2$-category for tree path algebras}\label{s5}

\subsection{Definition}\label{s5.1}

In this subsection, we work in the setup of Subsection~\ref{s3.1}.
Define the $2$-category $\mathscr{D}_A'$ to have
\begin{itemize}
\item one object $\mathtt{i}$ (identified with the category $\mathcal{C}$ in Subsection \ref{s3.1});
\item as $1$-morphisms, the functors given, up to equivalence with $A$-mod, by functors from the additive closure of $_AA\otimes_\Bbbk A_A\otimes_A{}_-$ and all $\mathrm{Dp}_{I}$;
\item as $2$-morphisms, all  natural transformations of functors.
\end{itemize}
The $2$-category $\mathscr{D}_A'$ is not fiat in general. However, it is finitary as shown in the following proposition.

\begin{proposition}\label{pr15}
The category $\mathscr{D}_A'$ is a finitary $2$-category.
\end{proposition}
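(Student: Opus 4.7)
The plan is to verify the four finitary axioms from Subsection~\ref{s2.3}. The single-object condition is immediate; biadditivity and $\Bbbk$-linearity of horizontal and vertical composition are inherited from the ambient $2$-category of additive functors on $A\text{-mod}$ together with natural transformations. Finite-dimensionality of morphism spaces follows because $2$-morphisms here are bimodule homomorphisms between finitely generated bimodules over the finite-dimensional algebra $A$, and idempotent splitting is built in through the use of the additive closure. For the identity $1$-morphism $\mathrm{Dp}_A$, since $Q$ is a (connected) tree the signed Hasse diagram of $Q^p$ is connected, so $A$ is indecomposable as an $A$-$A$-bimodule by Lemma~\ref{lm5}, and hence $\mathrm{Dp}_A$ is indecomposable as required.

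Next I would identify the indecomposable $1$-morphisms. Writing $F$ for the functor $A\otimes_\Bbbk A\otimes_A{}_-$, the bimodule $A\otimes_\Bbbk A$ decomposes via the primitive idempotents of $A$ into $\bigoplus_{i,j\in Q_0}Y_{ij}$, where $Y_{ij}:=Ae_i\otimes_\Bbbk e_jA$. Each $Y_{ij}$, being an outer tensor product of indecomposable projective one-sided modules, is indecomposable as a bimodule. Together with the finite set $\mathcal{I}(A)^{\mathrm{ind}}$ from Lemma~\ref{lm3}, this produces a finite list of indecomposable candidates. By Krull--Schmidt for finitely generated bimodules over the finite-dimensional algebra $A$, every indecomposable object in the additive closure is isomorphic to some $Y_{ij}$ or to some $\mathrm{Dp}_I$ with $I$ indecomposable.

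The main remaining step, where the argument actually requires computation, is closure of the additive closure under composition. Because additive closure is stable under direct summands, it suffices to verify this on generators. Compositions of the form $\mathrm{Dp}_I\circ\mathrm{Dp}_J\cong\mathrm{Dp}_{IJ}$ are handled as in $\mathscr{D}_A$ (Section~\ref{s3}). For the self-composition, the bimodule $A\otimes_\Bbbk A\otimes_\Bbbk A$ has a middle tensor factor that is a mere $\Bbbk$-space, so $F\circ F\cong F^{\oplus\dim_\Bbbk A}$. The key point is the mixed compositions: for $F\circ\mathrm{Dp}_I$, associated to the bimodule $A\otimes_\Bbbk I$, I would invoke hereditarity of $A$ to decompose the right $A$-submodule $I$ of $A$ in the form $I\cong\bigoplus_k e_{j_k}A$; plugging this in and decomposing $A$ on the left gives $A\otimes_\Bbbk I\cong\bigoplus_{i,k}Y_{i,j_k}$, which lies in the additive closure. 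The case $\mathrm{Dp}_I\circ F$ is symmetric, using the left-module decomposition of $I$. The main obstacle is precisely this mixed-composition bookkeeping: one has to convert the two-sided bimodule structure of $I$ into purely one-sided projective data and then reassemble the tensor product into summands of the form $Y_{ij}$; once this is in hand, all four finitary axioms fall into place.
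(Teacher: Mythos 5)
Your proof is correct and follows essentially the same route as the paper: both arguments reduce everything to the finiteness of $\mathcal{I}(A)$, the indecomposability of ${}_AA_A$ coming from connectedness of the tree, and --- for the crucial mixed compositions --- the hereditarity of $A$, which makes each ideal one-sided projective so that $A\otimes_\Bbbk I$ and $I\otimes_\Bbbk A$ decompose into summands of $A\otimes_\Bbbk A$. Your write-up is somewhat more detailed (explicitly checking each finitary axiom and invoking Krull--Schmidt), but the substance is identical to the paper's proof.
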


\begin{proof}
By definition, $\mathscr{D}_A'$ only has one object. Due to the connectedness of the tree algebra $A$, the identity functor
$\mathbb{1}_\mathtt{i}=\mathrm{Dp}_{A}$ is indecomposable. Note that $A$ is finite dimensional, therefore
the functor $_AA\otimes_\Bbbk A_A\otimes_A{}_-$ decomposes into a finite direct sum of projective functors.
From \cite[Corollary~11]{GM}, $A$ has finitely many ideals. Moreover, each ideal in $A$
is projective both as a left and as a right $A$-module since $A$ is hereditary.
It is clear that both $I\otimes_AA\cong I$ and $ I\cong A \otimes_A I$ as $A$-$A$-bimodules. This implies that
direct summands of both
\begin{displaymath}
\mathrm{Dp}_{I}\circ(_AA\otimes_\Bbbk A_A\otimes_A{}_-)\qquad\text{ and }\qquad
(_AA\otimes_\Bbbk A_A\otimes_A{}_-)\circ\mathrm{Dp}_{I}
\end{displaymath}
are isomorphic to direct summands of $_AA\otimes_\Bbbk A_A\otimes_A{}_-$.
Therefore the category $\mathscr{D}_A'(\mathtt{i},\mathtt{i})$ has finitely many indecomposable $1$-morphisms
(up to isomorphism). Spaces of $2$-morphisms are just $A$-$A$-bimodule homomorphisms between corresponding finite dimensional $A$-$A$-bimodules, hence have finite dimension.
\end{proof}

\subsection{Cells in $\mathscr{D}_A'$}\label{s5.2}

For simplicity, we denote by $F_{ij}$ the projective functors $A\varepsilon_i\otimes_\Bbbk \varepsilon_jA\otimes_A{}_-$
and by $X_{ij}$ the corresponding $A$-$A$-bimodule $A\varepsilon_i\otimes_\Bbbk \varepsilon_jA$, where $i,j\in Q_0$.

\begin{lemma}\label{lm16}
The list
\begin{equation}\label{eq5.1}
\{[\mathrm{Dp}_{I}], [F_{ij}]\ |\ I\in \mathcal{I}(A)^\mathrm{ind}
\text{ and } |G(I)|\neq1; i,j\in Q_0\}
\end{equation}
is a complete and irredundant list of elements in $\mathcal{S}_{\mathscr{D}_A'}$.
\end{lemma}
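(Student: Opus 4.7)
The plan is to identify the indecomposable $1$-morphisms in $\mathscr{D}_A'$ in three steps: (a) decomposing the generators of $\mathscr{D}_A'$ and recognizing the summands as indecomposable; (b) showing that the additive closure of $\{F_{ij}\}\cup\{\mathrm{Dp}_I:I\in\mathcal{I}(A)^{\mathrm{ind}}\}$ is closed under composition; and (c) eliminating the redundancy between the $F_{ij}$'s and those $\mathrm{Dp}_I$ with $|G(I)|=1$. For (a), the generator $_AA\otimes_\Bbbk A_A$ decomposes as $\bigoplus_{i,j\in Q_0}X_{ij}$, and each $X_{ij}$ is indecomposable because
\[
\mathrm{End}_{A\text{-}A}(X_{ij})\cong \mathrm{End}_A(A\varepsilon_i)^{\mathrm{op}}\otimes_\Bbbk \mathrm{End}_{A^{\mathrm{op}}}(\varepsilon_jA)\cong\Bbbk.
\]
By Lemma~\ref{lm5}, any ideal $I$ decomposes uniquely into indecomposable ideals, so the $\mathrm{Dp}_{I'}$ with $I'\in\mathcal{I}(A)^{\mathrm{ind}}$ exhaust the indecomposable summands coming from the $\mathrm{Dp}_I$ generators.

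For (b), the crucial input is that $A$ is hereditary, so any $A$-submodule of a projective $A$-module is again projective. This yields: $X_{ij}\otimes_A X_{kl}\cong A\varepsilon_i\otimes_\Bbbk(\varepsilon_jA\varepsilon_k)\otimes_\Bbbk\varepsilon_lA$, which by Lemma~\ref{lm1} is either $X_{il}$ or zero; $I\otimes_A J\cong IJ$ (using flatness of $J$ to promote the surjective multiplication map to an isomorphism), and $IJ$ is itself an ideal splitting into indecomposables; $X_{ij}\otimes_A I\cong A\varepsilon_i\otimes_\Bbbk \varepsilon_jI$, where $\varepsilon_jI$ is a right-$A$-submodule of the projective right-module $\varepsilon_jA$, hence projective and so $\cong\bigoplus_s\varepsilon_{m_s}A$, giving a direct sum of $X_{i,m_s}$'s; and symmetrically $I\otimes_A X_{ij}\cong I\varepsilon_i\otimes_\Bbbk\varepsilon_jA$, where $I\varepsilon_i$ is left-projective and yields a direct sum of $X_{n_t,j}$'s. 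Thus every indecomposable summand of a composition lies in the proposed list.

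For (c), suppose $I\in\mathcal{I}(A)^{\mathrm{ind}}$ has $G(I)=\{u\}$ with $s(u)=j$ and $t(u)=i$. The bimodule map $X_{ij}\to I$ sending $p\otimes q$ to $puq$ is surjective (since $I=AuA$) and injective: by Lemma~\ref{lm1}, distinct basis tensors $p\otimes q$ (with $p\in A\varepsilon_i$, $q\in \varepsilon_jA$ basis paths) are sent to distinct nonzero paths, each uniquely determined by the pair $(t(p),s(q))$ because the underlying graph is a tree. Hence $\mathrm{Dp}_I\cong F_{ij}$, and these are precisely the redundancies removed by imposing $|G(I)|\neq 1$. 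For irredundancy of what remains: the $F_{ij}$'s have pairwise non-isomorphic simple bimodule tops $L_i\otimes_\Bbbk L_j$; by Corollary~\ref{cr8}, two distinct indecomposable ideals $I\neq J$ admit no bimodule isomorphism; and for $|G(I)|\geq 2$ the bimodule top of $I$ is spanned by the images of the elements of $G(I)$ and so has dimension at least two, whereas each $X_{ij}$ has a one-dimensional top. I expect the main obstacle to be the closure-under-composition step, in particular the decomposition of $\varepsilon_jI$ and $I\varepsilon_i$ into indecomposable projective summands, which is where heredity of $A$ is essential.
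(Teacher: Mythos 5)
Your proof is correct and takes essentially the same route as the paper's: decompose the defining generators into the $X_{ij}$ and the indecomposable ideals, identify $\mathrm{Dp}_{\langle a\rangle}$ with $X_{t(a)s(a)}$, and separate the $|G(I)|\geq 2$ cases from the $F_{ij}$ by the minimal number of bimodule generators (equivalently, the dimension of the top). The only differences are cosmetic: the paper verifies $X_{t(a)s(a)}\cong\langle a\rangle$ by a dimension count rather than your explicit injectivity argument, and your closure-under-composition step (b) is not needed for this lemma since it is already the content of Proposition~\ref{pr15} and Lemma~\ref{lm17}.
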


\begin{proof}
The set $\{F_{ij}|\ 1\leq i,j\leq n\}$ is a complete and irredundant list of direct summands of the projective functor
$_AA\otimes_\Bbbk A_A\otimes_A{}_-$. Note that
\begin{equation}\label{eq5.2}
F_{ij}F_{st}=F_{it}^{\oplus \dim(\varepsilon_jA\varepsilon_s)}.
\end{equation}
When $|G(I)|=1 $, we have $I=\langle a\rangle$ for some $a\in Q^p$.
There is a unique $A$-$A$-bimodule homomorphism from the projective bimodule $X_{t(a)s(a)}$ to $I$
sending $\varepsilon_{t(a)}\otimes\varepsilon_{s(a)}$ to $a$. This homomorphism is surjective as
$I$ is generated by $a$. Comparing the dimensions of $I$ and $X_{t(a)s(a)}$, we see that this
homomorphism is, in fact, bijective. Hence $X_{t(a)s(a)}\cong I$.

When $|G(I)|\neq1$, then the cardinality of the minimal generating set for $I$ is strictly greater than $1$,
see Lemma~\ref{lm3}\eqref{lm3.2}. Therefore $\mathrm{Dp}_{I}$ is not isomorphic to any $F_{ij}$
since, for the latter, the corresponding projective $A$-$A$-bimodule $X_{ij}$ is generated by one element.
The claim follows.
\end{proof}

\begin{lemma}\label{lm17}
All cells in $ \mathcal{S}_{\mathscr{D}_A'} $ are listed as follows:
\begin{enumerate}[$($i$)$]
\item\label{lm17.1} For $I\in \mathcal{I}(A)^\mathrm{ind}$ such that $|G(I)|\neq1$, the set
$\{[\mathrm{Dp}_{I}]\}$ is a left cell, a right cell and thus a two-sided cell.
\item\label{lm17.2} For $j\in Q_0$, the set $\{[F_{ij}]|\ i\in Q_0\}$ is a left cell;
for $i\in Q_0$, the set  $\{[F_{ij}]|\ j\in Q_0\}$ is a right cell; and
the set $\{[F_{ij}]|\ i,j\in Q_0\}$ is a
two-sided cell which is maximal with respect to $\geq_J$.
\end{enumerate}
\end{lemma}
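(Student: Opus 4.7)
The plan is to first establish a ``multiplication absorption'' property and then use it to separate the two families of indecomposables in \eqref{eq5.1} into disjoint cells. Absorption states that any composition in $\mathscr{D}_A'$ involving some $F_{st}$ is a direct sum of $F_{?,?}$'s. I would prove this via
\[
X_{st}\otimes_A I \;\cong\; A\varepsilon_s\otimes_\Bbbk (\varepsilon_t A\otimes_A I) \;\cong\; A\varepsilon_s\otimes_\Bbbk \varepsilon_t I,
\]
plus the symmetric identification on the other side. Since $A$ is hereditary, $\varepsilon_t I\subset\varepsilon_t A$ is a projective right $A$-module, hence $\varepsilon_t I\cong\bigoplus_k\varepsilon_{j_k}A$, which yields $X_{st}\otimes_A I\cong\bigoplus_k X_{s,j_k}$; formula~\eqref{eq5.2} deals with the $F\circ F$ case. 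Combined with Lemma~\ref{lm16}, absorption ensures that no $\mathrm{Dp}_I$ with $|G(I)|\neq 1$ arises as a summand of any $F$-involving composition.

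For part~\eqref{lm17.1}, suppose $G\sim_L\mathrm{Dp}_I$ with $|G(I)|\neq 1$. By absorption, $G$ must be of the form $\mathrm{Dp}_J$ for some $J$, and any witness $H$ with $G\leq_L\mathrm{Dp}_I$ must be of the form $\mathrm{Dp}_K$. Since $H\circ\mathrm{Dp}_I=\mathrm{Dp}_{KI}$ with $KI\subseteq I$, we obtain an injective bimodule homomorphism $J\hookrightarrow I$, and by symmetry $I\hookrightarrow J$; Corollary~\ref{cr8} then forces $I=J$. The right and two-sided versions are proved identically.

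For part~\eqref{lm17.2}, the loopless tree condition gives $\dim\varepsilon_k A\varepsilon_k=1$, so \eqref{eq5.2} reduces to $F_{ik}\circ F_{kj}=F_{ij}$, proving that all $F_{ij}$ with a fixed second index $j$ are $\sim_L$-equivalent. No $F_{st}$ with $t\neq j$ enters this cell because left compositions preserve the second index, and no $\mathrm{Dp}_I$ with $|G(I)|\neq 1$ enters by absorption; right cells are symmetric. The two-sided cell description follows from $F_{is}\circ F_{st}\circ F_{tj}=F_{ij}$, and maximality with respect to $\geq_J$ is established by picking, for any $I\in\mathcal{I}(A)^\mathrm{ind}$, indices $k,l\in Q_0$ with $\varepsilon_k I\varepsilon_l\neq 0$ (possible because $I\neq 0$) and observing $F_{ik}\circ\mathrm{Dp}_I\circ F_{lj}\cong X_{ij}^{\oplus\dim\varepsilon_k I\varepsilon_l}$, which gives $F_{ij}\geq_J\mathrm{Dp}_I$ while absorption rules out the reverse.

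The main obstacle is the absorption step itself: hereditariness delivers the projectivity of $\varepsilon_t I$ for free, but one must carefully identify its indecomposable summands as $\varepsilon_{j_k}A$ (using that $A$ is basic, so the $\varepsilon_j A$ form a complete list of indecomposable right projectives) and track this decomposition through the tensor product to a clean direct sum of $X_{s,j_k}$'s on the level of bimodules. Once absorption is in place, the cell analysis is essentially bookkeeping plus one application of Corollary~\ref{cr8}.
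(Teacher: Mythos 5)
Your proposal is correct and follows essentially the same route as the paper: the key step in both is that hereditariness makes $\varepsilon_t I$ a projective right $A$-module, so $F_{st}\circ\mathrm{Dp}_I$ (and symmetrically $\mathrm{Dp}_I\circ F_{st}$) decomposes into projective functors, after which formula \eqref{eq5.2} and the argument of Lemma~\ref{lm6} finish the cell bookkeeping. The only (cosmetic) difference is that the paper identifies the summands of $\varepsilon_jI$ explicitly as $w_iA$ for an anti-chain of minimal generators, whereas you invoke the abstract classification of projectives over a basic algebra; you also spell out the maximality and separation arguments that the paper leaves implicit.
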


\begin{proof}
Note that $F_{ij}\circ\mathrm{Dp}_{I}\cong A\varepsilon_i\otimes_\Bbbk \varepsilon_jI\otimes_A-$, where
$\varepsilon_jI$ is projective as right $A$-module as $A$ is hereditary.
Assume that $G(I)=\{u_1,u_2,\ldots,u_k\}$, then $\varepsilon_jI$ is generated, as right $A$-module,
by the set $\{\varepsilon_jau_i|\ a\in A, 1\leq i\leq k\}=:Y$. Let $\{w_1,w_2,\ldots,w_l\}$ be the
set of all elements in $Y$ which are minimal with respect to $\preceq$. We have $t(w_i)=j$ for all $1\leq i\leq l$
and $\{w_1,w_2,\ldots,w_l\}$ is an anti-chain with respect to $\preceq$.
Then, similarly to the proof of Lemma~\ref{lm3}, we have that
$\varepsilon_jI\cong \oplus_{1\leq i\leq l} w_iA$ as right $A$-modules.

Each $w_iA$ is isomorphic to the projective right $A$-module $\varepsilon_{s(w_i)}A$.
Thus $F_{ij}\circ\mathrm{Dp}_{I}$ decomposes into a direct sum of projective functors.
Together with \eqref{eq5.2} and the statement of Lemma~\ref{lm6},
we get both claim~\eqref{lm17.1} and claim~\eqref{lm17.2}.
\end{proof}

\subsection{Duflo involutions}\label{s5.3}

We now consider the Duflo involutions corresponding to left cells in $ \mathcal{S}_{\mathscr{D}_A'} $.

\begin{theorem}\label{thm18}
{\tiny\hspace{2mm}}
\begin{enumerate}[$($i$)$]
\item\label{thm18.1} Let $I$ be an indecomposable ideal with $|G(I)|\neq1$. Then the element $\mathrm{Dp}_{J}$,
where $ J=\langle\varepsilon_i|\ i \in s_{G(I)}\rangle $,
is the Duflo involution corresponding to $\{[\mathrm{Dp}_{I}]\}$.
\item\label{thm18.2} For $j\in Q_0$, the element $F_{jj}$ is the Duflo involution for
$ \{[F_{ij}]|\ i\in Q_0\}$.
\end{enumerate}
\end{theorem}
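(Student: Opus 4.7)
The plan is to verify, for each of parts~\eqref{thm18.1} and~\eqref{thm18.2}, the three conditions from Subsection~\ref{s2.7} by exhibiting an explicit submodule $K \subset P_{\mathbb{1}_\mathtt{i}}$ in $\overline{\mathbf{P}}_\mathtt{i}(\mathtt{i})$ whose simple top is the proposed Duflo involution, and then checking the required annihilation of the simple subquotients of $P_{\mathbb{1}_\mathtt{i}}/K$. Part~\eqref{thm18.1} extends Theorem~\ref{thm13} from the $2$-subcategory $\mathscr{D}_A$ up to $\mathscr{D}_A'$, while part~\eqref{thm18.2} follows the template of Theorem~\ref{thm14}.

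For part~\eqref{thm18.1}, I would take $K := P_{\mathrm{Dp}_J}$ embedded in $P_{\mathbb{1}_\mathtt{i}}$ via the canonical inclusion $\iota_{(J,A)}$. Proposition~\ref{pr9} identifies the simple top of $K$ as $L_{\mathrm{Dp}_J}$; Lemma~\ref{lm11} yields $\mathrm{Dp}_I L_{\mathrm{Dp}_J} \neq 0$; and Lemma~\ref{lm12} handles the annihilation $\mathrm{Dp}_I L_{\mathrm{Dp}_{J'}} = 0$ for every $J' \not\subset J$. What is new relative to Theorem~\ref{thm13} is the family of simples $L_{F_{st}}$. For these I would work at the bimodule level: using that $I$ is flat over $A$ on the right by heredity, the functor $I \otimes_A -$ is exact, and a direct computation shows $I \otimes_A L_{st} = 0$ whenever $s \notin s_{G(I)}$. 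Because the bimodule $A/J$ has composition factors $L_{st}$ only for $s, t \notin s_{G(I)}$, every pair $(s,t)$ with $L_{F_{st}}$ a composition factor of $P_{\mathbb{1}_\mathtt{i}}/K$ falls into this range, and the split-epi principle employed in the proof of Theorem~\ref{thm14} transports the bimodule-level vanishing into $\mathrm{Dp}_I L_{F_{st}} = 0$ in the abelianization.

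For part~\eqref{thm18.2}, I would take $K := P_{F_{jj}}$ embedded in $P_{\mathbb{1}_\mathtt{i}}$ via the bimodule map $X_{jj} \to A$ sending $\varepsilon_j \otimes \varepsilon_j$ to $\varepsilon_j$; Lemma~\ref{lm1} forces this map to be injective with image the subbimodule $A\varepsilon_j A$ spanned by the paths passing through $j$, so that $K$ has simple top $L_{F_{jj}}$. The simple subquotients of $P_{\mathbb{1}_\mathtt{i}}/K$ now split into two types: those $L_{\mathrm{Dp}_{J'}}$ for which $J'$ contains no path through $j$ (equivalently $j \notin s_{G(J')}$), and those $L_{F_{st}}$ coming from composition factors $L_{st}$ of $A/A\varepsilon_j A$ (which forces $s, t \neq j$). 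In both families $X_{ij} \otimes_A (\cdot)$ vanishes at the bimodule level because $\varepsilon_j$ kills the relevant bimodule on the appropriate side, so Theorem~\ref{thm14}'s split-epi principle again gives $F_{ij} L_H = 0$. Finally, $F_{ij} L_{F_{jj}} \neq 0$ for every $i \in Q_0$ follows from $X_{ij} \otimes_A L_{jj} \neq 0$ by running the same bimodule-to-abelianization argument in the converse direction, as in the last paragraph of the proof of Theorem~\ref{thm14}.

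The main obstacle I anticipate is not the annihilation computations themselves but rather a clean enumeration of every simple subquotient of $P_{\mathbb{1}_\mathtt{i}}/K$ in the enriched principal $2$-representation of $\mathscr{D}_A'$, since this abelianization carries strictly more indecomposable projectives than the one attached to $\mathscr{D}_A$ in Section~\ref{s3}. The cleanest way to bypass case-by-case bookkeeping is to prove once-and-for-all that the evident functor from $A$-$A$-bimodules into $\overline{\mathbf{P}}_\mathtt{i}(\mathtt{i})$ (sending $L_{st}$ to $L_{F_{st}}$ and $J'$ to a submodule of $P_{\mathrm{Dp}_{J'}}$) lets one read off the relevant composition factors of $P_{\mathbb{1}_\mathtt{i}}/K$ from the bimodule composition factors of $A/J$ or $A/A\varepsilon_j A$, combined with the multiplicity-freeness of the $\mathrm{Dp}_{J'}$-part of $P_{\mathbb{1}_\mathtt{i}}$ inherited from Subsection~\ref{s3.3}; the Duflo verifications then reduce to the bimodule-level vanishings described above.
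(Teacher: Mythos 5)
Your part~\eqref{thm18.1} is close to the paper's own route: both arguments reduce to Proposition~\ref{pr9} and Lemmata~\ref{lm11} and~\ref{lm12}, and both must then account for the simples attached to the new $1$-morphisms $F_{st}$. One point you leave implicit is that Lemmata~\ref{lm11} and~\ref{lm12} were proved inside the abelianization for $\mathscr{D}_A$, and in the larger abelianization for $\mathscr{D}_A'$ the projective presentation \eqref{eq3.1} could a priori acquire new summands $P_{F_{ij}}$; the paper disposes of this by noting that $\mathrm{Hom}_{A\text{-}A}(X_{ij},J)=0$ unless there is a path from $j$ to $i$, in which case $X_{ij}$ is itself isomorphic to an ideal and is already covered by Proposition~\ref{pr9}. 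Your computation $I\otimes_A L_{st}=0$ for $s\notin s_{G(I)}$, together with the fact that $L_{F_{st}}$ occurs in $P_{\mathbb{1}_\mathtt{i}}/K$ with multiplicity $\dim\varepsilon_s(A/J)\varepsilon_t$, is a correct (and mildly different) way of handling the new simples.

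Part~\eqref{thm18.2}, however, has a genuine gap. The multiplicity of $L_{\mathrm{Dp}_{J'}}$ in $P_{\mathbb{1}_\mathtt{i}}/K$ equals $\dim\mathrm{Hom}_{A\text{-}A}(J',A)-\dim\mathrm{Hom}_{A\text{-}A}(J',A\varepsilon_jA)$, which by Corollary~\ref{cr8} is nonzero precisely when $J'\not\subset A\varepsilon_jA$. This is much weaker than your condition ``$J'$ contains no path through $j$'' (which, incidentally, is not equivalent to $j\notin s_{G(J')}$ either). For instance, in Example~\ref{ex2} with $j=2$ the ideal $J'=\langle\varepsilon_1,\varepsilon_2\rangle$ is not contained in $A\varepsilon_2A$, so $L_{\mathrm{Dp}_{J'}}$ genuinely occurs in $P_{\mathbb{1}_\mathtt{i}}/K$, yet $\varepsilon_2J'\neq 0$, so your proposed mechanism ``$\varepsilon_j$ kills the relevant bimodule'' yields nothing for it. The missing idea — and the heart of the paper's proof of this part — is that for any indecomposable ideal $J'$ with $|G(J')|\neq 1$ the bimodule map $T\to J'$ underlying a projective presentation of $L_{\mathrm{Dp}_{J'}}$ is already \emph{surjective}: there is a surjection onto $J'$ from a projective bimodule, and since $J'$ is not itself projective as a bimodule, every such map factors through the radical of $P_{\mathrm{Dp}_{J'}}$, hence through $T$. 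Tensoring with $X_{jj}$ then produces a surjection between projective bimodules, which splits, giving $F_{jj}L_{\mathrm{Dp}_{J'}}=0$ for \emph{every} such $J'$ with no restriction on its relation to $j$; once this is in place, the delicate enumeration of which $L_{\mathrm{Dp}_{J'}}$ occur in $P_{\mathbb{1}_\mathtt{i}}/K$ becomes unnecessary.
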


\begin{proof}
Note that $\mathscr{D}_A$ is a $2$-subcategory of $\mathscr{D}_A'$. We claim that formula \eqref{eq3.1} gives
a projective presentation of $L_{\mathrm{Dp}_{J}}$ in $\overline{\mathbf{P}}_{\mathtt{i}}(\mathtt{i})$ for the
$2$-category $\mathscr{D}_A'$. Indeed, if there is no path from $j$ to $i$, all $2$-morphisms $P_{F_{ij}}\to P_{\mathrm{Dp}_{J}}$ are trivial since $\mathrm{Hom}_{A\text{-}A}(X_{ij},J)=0$. If there is a path from $j$ to $i$, the $A$-$A$-bimodule underlying $F_{ij}$ is an ideal in ${}_AA_A$, which is contained in the case considered in Proposition~\ref{pr9}. Therefore we obtain the claim and claim~\eqref{thm18.1} is proved similarly to Theorem~\ref{thm13}.

Claim~\eqref{thm18.2} is proved similarly to Theorem~\ref{thm14}. Let $I$ be an indecomposable ideal with
$|G(I)|\neq1$ and let
\begin{displaymath}
P\to P_{\mathrm{Dp}_{I}} \tto L_{\mathrm{Dp}_{I}}
\end{displaymath}
be a projective presentation of $L_{\mathrm{Dp}_{I}}$. Consider the bimodule map $T\to I$ underlying
$P\to P_{\mathrm{Dp}_{I}}$. There is  certainly a surjection from some projective
bimodule to $I$. Since $I$ itself is not projective, it follows that $T\to I$ is, in fact, a surjection.
Therefore the map $X_{jj}\otimes_A T\to X_{jj}\otimes_A I$ is surjective as well. But now all
direct summands of both $X_{jj}\otimes_A T$ and  $X_{jj}\otimes_A I$ are projective bimodules and
hence the latter map splits. This shows that $F_{ij}\,P\to F_{ij}\,P_{\mathrm{Dp}_{I}}$ is the zero object
in $\overline{\mathbf{P}}_{\mathtt{i}}(\mathtt{i})$ for $\mathscr{D}_A'$, meaning that
$F_{jj}\,L_{\mathrm{Dp}_{I}}=0$.

Similarly, following the proof of Theorem~\ref{thm14}, one shows that
$F_{jj}\,L_{F_{st}}=0$ for $s\neq j$ and that $F_{jj}\,L_{F_{jj}}\neq 0$.
Finally, exactly as at the end of the proof of Theorem~\ref{thm14}, one argues that the above implies
that $F_{jj}$ is the Duflo involution for $\{[F_{ij}]|\ i\in Q_0\}$. This completes the proof.
\end{proof}

\subsection{An alternative description}\label{s5.4}

Consider the {\em natural} left $A$-module $N$ defined by assigning the $1$-dimensional vector space $\Bbbk$
to each vertex of $Q$ and the identity linear transformation to each arrow in $Q$. Let
$N'$ be the {\em natural}  right $A$-module  defined by assigning the
$1$-dimensional vector space $\Bbbk$ to each vertex of $Q$ and the identity linear transformation to each
arrow in $Q^{\mathrm{op}}$. Consider the $A$-$A$-bimodule $N\otimes_{\Bbbk}N'$. Directly from the definition we have
\begin{equation}\label{eq5.3}
\dim \varepsilon_i  N\otimes_{\Bbbk}N'\varepsilon_j=1
\end{equation}
for all $i,j\in Q_0$. As $Q$ is a connected tree, the action graph of the $A\otimes_{\Bbbk}A^{\mathrm{op}}$-action
on $N\otimes_{\Bbbk}N'$ is connected, which, combined with \eqref{eq5.3}, implies that
the $A$-$A$-bimodule $N\otimes_{\Bbbk}N'$ is indecomposable. It turns out that
the $A$-$A$-bimodule $N\otimes_{\Bbbk}N'$ contains a lot of  information about
$1$-morphisms of the $2$-category $\mathscr{D}'_A$ as explained in the following proposition.

\begin{proposition}\label{pr19}
{\tiny\hspace{2mm}}

\begin{enumerate}[$($i$)$]
\item\label{pr19.1} For $i,j\in Q_0$, the unique (up to scalar) non-zero homomorphism
$\varphi_{ij}:X_{ij}\to N\otimes_{\Bbbk}N'$ is injective.
\item\label{pr19.2} The subbimodule $\sum_{i\in Q_0}\mathrm{Im}(\varphi_{ii})$ of $N\otimes_{\Bbbk}N'$
is isomorphic to ${}_AA_A$.
\end{enumerate}
\end{proposition}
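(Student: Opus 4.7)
For part~(i), my plan is to combine the identity $\dim \varepsilon_i (N\otimes_{\Bbbk} N')\varepsilon_j = 1$ from \eqref{eq5.3} with the standard adjunction $\mathrm{Hom}_{A\text{-}A}(X_{ij}, M) \cong \varepsilon_i M \varepsilon_j$ to see that the space of bimodule homomorphisms $X_{ij} \to N\otimes_{\Bbbk}N'$ is one-dimensional, which gives uniqueness up to scalar. Injectivity I would prove by a direct idempotent-decomposition argument: Lemma~\ref{lm1} forces $\dim \varepsilon_s X_{ij}\varepsilon_t = \dim \varepsilon_s A\varepsilon_i \cdot \dim \varepsilon_j A\varepsilon_t \leq 1$, so $X_{ij}$ splits as a vector space into pieces each of dimension at most one. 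Fix bases $n_i \in \varepsilon_i N$ and $n_j' \in N'\varepsilon_j$ and normalize $\varphi_{ij}$ by $\varphi_{ij}(\varepsilon_i\otimes\varepsilon_j)=n_i\otimes n_j'$. For each non-zero component $\varepsilon_s X_{ij}\varepsilon_t$, spanned by $p\otimes q$ with $p$ the path $i\to s$ and $q$ the path $t\to j$, one computes $\varphi_{ij}(p\otimes q) = p(n_i\otimes n_j')q = n_s\otimes n_t'$, which is non-zero because arrows act as identity maps on the natural modules. Since $\varphi_{ij}$ is non-zero on every non-zero homogeneous piece, it is injective.

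For part~(ii), my approach is to identify the support of $\sum_{i\in Q_0}\mathrm{Im}(\varphi_{ii})$ inside $N\otimes_{\Bbbk}N'$ and to match it with that of $A$, then exhibit an explicit bimodule isomorphism. Repeating the computation from part~(i) with $i=j$, the $(s,t)$-component of $\mathrm{Im}(\varphi_{ii})$ equals $\Bbbk\, n_s\otimes n_t'$ precisely when $Q$ contains a path $i\to s$ and a path $t\to i$, and is zero otherwise. Summing over $i$, the tree property together with Lemma~\ref{lm1} shows that the $(s,t)$-component of $\sum_i\mathrm{Im}(\varphi_{ii})$ is non-zero if and only if $Q$ admits a (necessarily unique) path from $t$ to $s$; for the forward direction one may take $i=t$. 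This matches the support of $\varepsilon_s A\varepsilon_t$ dimension-wise.

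To upgrade this equality of supports to a bimodule isomorphism, I would define $\Psi\colon A \to N\otimes_{\Bbbk}N'$ on the path basis by sending the unique path $p\colon t\to s$ to $n_s\otimes n_t'$, and check that $\Psi$ is an $A$-$A$-bimodule homomorphism. This reduces to verifying that $a\cdot(n_s\otimes n_t')=n_{s'}\otimes n_t'$ when $a$ is an arrow $s\to s'$ and vanishes otherwise, which is immediate from the definition of the natural modules; the right action is symmetric. Injectivity of $\Psi$ is automatic since distinct path basis vectors land in distinct one-dimensional components of $N\otimes_{\Bbbk}N'$, and the image of $\Psi$ coincides with $\sum_i\mathrm{Im}(\varphi_{ii})$ by the support comparison, yielding the desired isomorphism.

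The main subtlety I anticipate is the bookkeeping in part~(ii): one must choose the normalizations of the maps $\varphi_{ii}$ consistently so that the multiplicative structure of $A$ transports faithfully to a subbimodule of $N\otimes_{\Bbbk}N'$. Once the diagonal choices $\varphi_{ii}(\varepsilon_i\otimes\varepsilon_i)=n_i\otimes n_i'$ are fixed, everything else is forced by the identity action of arrows on the natural modules, and no further computation beyond the direct verifications above is required.
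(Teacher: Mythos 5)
Your proof is correct, and for part~(i) it takes a genuinely different route from the paper. The paper observes that each projective left module $A\varepsilon_i$ embeds into $N$ and each projective right module $\varepsilon_jA$ embeds into $N'$, so that $X_{ij}$ is (isomorphic to) a subbimodule of $N\otimes_{\Bbbk}N'$ by exactness of $\otimes_{\Bbbk}$; injectivity and uniqueness of $\varphi_{ij}$ then come for free from Lemma~\ref{lm7} applied to the multiplicity-free bimodule $N\otimes_{\Bbbk}N'$. You instead derive one-dimensionality of the hom space from the adjunction $\mathrm{Hom}_{A\text{-}A}(X_{ij},M)\cong\varepsilon_iM\varepsilon_j$ together with \eqref{eq5.3}, and prove injectivity by hand via the decomposition of $X_{ij}$ into the at most one-dimensional pieces $\varepsilon_sX_{ij}\varepsilon_t$, checking that the normalized map is non-zero on each piece. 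Your argument is more elementary and self-contained (it does not invoke Lemma~\ref{lm7} at all for this part), at the cost of some explicit bookkeeping; the paper's argument is shorter because it reuses machinery already established. For part~(ii) the two proofs coincide in substance: the paper simply asserts that $\sum_i\mathrm{Im}(\varphi_{ii})$ has the paths of $A$ as a basis and that path $\mapsto$ path is an isomorphism, while you carry out the support comparison and the verification that $\Psi$ is a bimodule map in detail. The only cosmetic quibble is your labelling of which implication of the support equivalence uses $i=t$ (it is the direction producing a non-zero component from a path $t\to s$); this does not affect correctness.
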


\begin{proof}
By construction, each left projective $A$-module is a submodule of $N$ and each right projective
$A$-module is a submodule of $N'$. By tensoring over $\Bbbk$, which is exact, we thus get that each $X_{ij}$
is a submodule of $N\otimes_{\Bbbk}N'$. Therefore claim~\eqref{pr19.1} follows from
Lemma~\ref{lm7}\eqref{lm7.1}.

By construction, the subbimodule $\sum_{i\in Q_0}\mathrm{Im}(\varphi_{ii})$ has a basis given by
all paths in $A$. It is easy to check that, mapping such a path to the corresponding path in ${}_AA_A$
defines an isomorphism. This proves claim~\eqref{pr19.2}.
\end{proof}

After Proposition~\ref{pr19}, we may identify each $X_{ij}$ with the image of $\varphi_{ij}$
and also we may identify ${}_AA_A$ with the subbimodule $\sum_{i\in Q_0}\mathrm{Im}(\varphi_{ii})$
of $N\otimes_{\Bbbk}N'$. In this way all bimodules involved in the definition of $\mathscr{D}'_A$ are
realized as subbimodules  of $N\otimes_{\Bbbk}N'$.
For $i,j\in Q_0$ the intersection $X_{ij}\cap {}_AA_A$ is the unique subbimodule of both
$X_{ij}$ and ${}_AA_A$ which is maximal with respect to inclusions.
Denote by $\mathcal{T}$ the set $\{X_{ij}|\ i,j\in Q_0\}\cup\mathcal{I}(A)^{\mathrm{ind}}$.

\begin{corollary}\label{cr20}
Let $K,M\in\mathcal{T}$.

\begin{enumerate}[$($i$)$]
\item\label{cr20.1} We have $\dim\mathrm{Hom}_{A\text{-}A}(K,M)\leq 1$.
\item\label{cr20.2} Each non-zero homomorphism in $\mathrm{Hom}_{A\text{-}A}(K,M)$ is injective.
\item\label{cr20.3} We have $\mathrm{Hom}_{A\text{-}A}(K,M)\neq 0$ if and only if $K\subset M$
(inside $N\otimes_{\Bbbk}N'$).
\item\label{cr20.4} If $K\subset M$
(inside $N\otimes_{\Bbbk}N'$), then each non-zero homomorphism in $\mathrm{Hom}_{A\text{-}A}(K,M)$
is a scalar multiple of the natural inclusion.
\item\label{cr20.5} For any indecomposable $1$-morphisms $F,G\in \mathscr{D}'_A$  we have
$\dim\mathrm{Hom}(F,G)\leq 1$.
\end{enumerate}
\end{corollary}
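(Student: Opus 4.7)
The plan is to reduce the entire corollary to a single application of Lemma~\ref{lm7} to the bimodule $N\otimes_{\Bbbk}N'$ regarded as a module over $A\otimes_{\Bbbk}A^{\mathrm{op}}$, and then to derive claim~\eqref{cr20.5} from claim~\eqref{cr20.1} via the bimodule description of $2$-morphisms.

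First I would set up the two hypotheses needed by Lemma~\ref{lm7}. Equation~\eqref{eq5.3} says that every idempotent-truncated piece $\varepsilon_i(N\otimes_{\Bbbk}N')\varepsilon_j$ is one-dimensional; translated into the representation theory of $A\otimes_{\Bbbk}A^{\mathrm{op}}$, this is precisely the statement that every simple subquotient of $N\otimes_{\Bbbk}N'$ appears with multiplicity at most one, which is the ``all composition multiplicities $\leq 1$'' hypothesis of Lemma~\ref{lm7}. Next, every $K\in\mathcal{T}$ is an indecomposable submodule of $N\otimes_{\Bbbk}N'$: the $X_{ij}$ are indecomposable projective bimodules (since the $\varepsilon_i$ are primitive) and embed by Proposition~\ref{pr19}\eqref{pr19.1}, while the indecomposable ideals are indecomposable by definition and embed via Proposition~\ref{pr19}\eqref{pr19.2} into the subbimodule ${}_AA_A\subset N\otimes_{\Bbbk}N'$.

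With these two ingredients I would apply Lemma~\ref{lm7} with $B:=A\otimes_{\Bbbk}A^{\mathrm{op}}$, $M:=N\otimes_{\Bbbk}N'$, taking $K\in\mathcal{T}$ in the role of ``$N$'' and $M\in\mathcal{T}$ in the role of ``$K$'' in the lemma. Claim~\eqref{cr20.2} is then a direct restatement of Lemma~\ref{lm7}\eqref{lm7.1}, and claims~\eqref{cr20.3} and~\eqref{cr20.4} are precisely the two cases of Lemma~\ref{lm7}\eqref{lm7.2}; claim~\eqref{cr20.1} is a formal consequence, since the hom space has dimension $0$ or $1$.

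For claim~\eqref{cr20.5}, I would invoke Lemma~\ref{lm16}: every indecomposable $1$-morphism of $\mathscr{D}'_A$ is, up to isomorphism, of the form $M\otimes_A{}_-$ with $M\in\mathcal{T}$. Since $A$ is hereditary, each such $M$ is projective as a right $A$-module (either as a projective bimodule or as an ideal in a hereditary algebra), so natural transformations between two such tensor functors are canonically identified with $A$-$A$-bimodule homomorphisms between the underlying bimodules. Claim~\eqref{cr20.5} therefore follows from claim~\eqref{cr20.1}. The whole argument is essentially bookkeeping; the real work has already been done in Proposition~\ref{pr19} and Lemma~\ref{lm7}, and the only mild subtlety is the identification of $2$-hom spaces with bimodule hom spaces at the very last step, which is standard in the hereditary setting.
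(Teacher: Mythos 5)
Your argument is correct and follows exactly the route the paper takes: claims (i)--(iv) are obtained by applying Lemma~\ref{lm7} to $N\otimes_{\Bbbk}N'$ viewed as an $A\otimes_{\Bbbk}A^{\mathrm{op}}$-module (with multiplicity-freeness from \eqref{eq5.3} and the embeddings from Proposition~\ref{pr19}), and claim (v) is deduced from claim (i) via the identification of $2$-morphisms with bimodule homomorphisms already noted in the proof of Proposition~\ref{pr15}. You have merely spelled out the details that the paper's one-line proof leaves implicit.
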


\begin{proof}
Claims~\eqref{cr20.1}---\eqref{cr20.4} follows from Lemma~\ref{lm7}.
Claim~\eqref{cr20.5} follows from claim~\eqref{cr20.1}.
\end{proof}

By Proposition~\ref{pr19}, all composition multiplicities in  $N\otimes_{\Bbbk}N'$ are equal to one.
Since $\sum_{i,j\in Q_0}X_{ij}$ is a projective generator in the category of $A$-$A$-bimodules, it follows
that $\sum_{i,j\in Q_0}X_{ij}=N\otimes_{\Bbbk}N'$. Consequently, the set
$Z:=\{\varepsilon_i\otimes\varepsilon_j|\ i,j\in Q_0\}$
is a basis in $N\otimes_{\Bbbk}N'$ as, for $i,j\in Q_0$, the element $\varepsilon_i\otimes\varepsilon_j$
is a non-zero element in the one-dimensional space $\varepsilon_i  N\otimes_{\Bbbk}N'\varepsilon_j$.

Now we extend the notion of the partial order $\preceq$ on $Q^p$ (see Subsection~\ref{s2.2}) to $Z$.
For $w,w'\in Z$, we write $w\preceq w'$ if $w'=aw$ or $w'=wa$ for some $a\in Q^p$ and $w\prec w'$ if
$w\preceq w'$ and $w\neq w'$. For a subset $X\subset N\otimes_\Bbbk N'$ set $M_X=AXA$.
The following two statements are proved similarly to Lemmata~\ref{lm3} and \ref{lm4}, respectively.

\begin{lemma}\label{lm21}
{\tiny\hspace{2mm}}
\begin{enumerate}[$($i$)$]
\item\label{lm21.1} The map $X\mapsto M_X$ is a bijection from the set of anti-chains in $Z$ to the set of
all $A$-$A$-subbimodules in $N\otimes_\Bbbk N'$.
\item\label{lm21.2} For each anti-chain $X$ in $Z$, the set $X$ is a minimal set of generators in $M_X$ in the sense
that $M_Y\neq M_X$ for any proper subset $Y$ of $X$.
\end{enumerate}
\end{lemma}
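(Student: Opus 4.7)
The plan is to follow the structure of the proof of Lemma~\ref{lm3} almost line by line, with the basis $Z\subset N\otimes_\Bbbk N'$ taking over the role that $Q^p$ plays for $A$. The crucial replacement for Lemma~\ref{lm1} in that argument is equation~\eqref{eq5.3}: since each weight space $\varepsilon_i(N\otimes_\Bbbk N')\varepsilon_j$ is one-dimensional and spanned by $\varepsilon_i\otimes\varepsilon_j$, for any $A$-$A$-subbimodule $M\subseteq N\otimes_\Bbbk N'$ and any $m\in M$ the decomposition $m=\sum_{i,j}\varepsilon_i m\varepsilon_j$ writes $m$ as a sum of scalar multiples of basis elements, each nonzero summand of which already lies in $M$. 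Thus $M$ is the linear span of $M\cap Z$, exactly as every ideal of $A$ is spanned by the paths it contains.

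For claim~\eqref{lm21.2}, I would argue by contradiction. Let $X$ be an antichain in $Z$ and $Y\subsetneq X$, and pick $x\in X\setminus Y$. If $M_Y=M_X$, then $x\in M_Y$; expanding $x$ as a linear combination of bimodule products $ayb$ with $y\in Y$ and $a,b\in A$, each such product is either zero or a nonzero scalar multiple of some basis element of $Z$. Because $x$ is itself a basis element and its weight space is one-dimensional, at least one such term must be a nonzero scalar multiple of $x$, forcing $y\preceq x$ for some $y\in Y$ with $y\neq x$; this contradicts $X$ being an antichain. Claim~\eqref{lm21.2} then yields injectivity of $X\mapsto M_X$ in claim~\eqref{lm21.1} by the same swap-argument as in Lemma~\ref{lm3}: given $M_X=M_{X'}$, running the above containment once in each direction produces $x''\in X$ with $x''\preceq x'\preceq x$ for any $x\in X$, and the antichain property of $X$ forces $x=x'=x''\in X'$.

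For surjectivity in claim~\eqref{lm21.1}, let $M$ be any $A$-$A$-subbimodule and $S:=M\cap Z$. The paragraph above says $M$ is determined by $S$, and being a subbimodule amounts to $S$ being an up-set of $(Z,\preceq)$. Taking $X$ to be the (automatically antichain) set of $\preceq$-minimal elements of $S$, a finiteness argument analogous to the one at the end of the proof of Lemma~\ref{lm3} then gives $M=M_X$. The main step, and the only potentially delicate point, is the dictionary underlying this last reduction: one must check that left multiplication by a path $a$ with $s(a)=i$ sends $\varepsilon_i\otimes\varepsilon_j$ to $\varepsilon_{t(a)}\otimes\varepsilon_j$ (and symmetrically on the right), so that reachability under the $A$-$A$-action on $Z$ coincides with the partial order $\preceq$ defined in the text. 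Once this identification is made, both claims are immediate from the finite-poset combinatorics already carried out in Lemmata~\ref{lm3}~and~\ref{lm4}.
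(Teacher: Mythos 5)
Your proposal is correct and matches the paper's intended argument: the paper gives no written proof, stating only that the lemma ``is proved similarly to Lemma~\ref{lm3},'' and your write-up carries out precisely that analogy, correctly identifying \eqref{eq5.3} (one-dimensionality of each weight space $\varepsilon_i(N\otimes_\Bbbk N')\varepsilon_j$) as the substitute for Lemma~\ref{lm1} that makes every subbimodule the span of the basis elements of $Z$ it contains. The remaining steps (minimality of antichains, injectivity via the swap argument, surjectivity via minimal elements of the up-set $M\cap Z$) transfer verbatim, so nothing is missing.
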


\begin{lemma}\label{lm22}
Let $K\subset M$ be any two subbimodules of $N\otimes_\Bbbk N'$. Then $M$ covers $K$ if and only if $\dim(M/K)=1$.
\end{lemma}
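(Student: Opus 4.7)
The plan is to mirror the proof of Lemma~\ref{lm4} nearly verbatim, with the basis $Z=\{\varepsilon_i\otimes\varepsilon_j\mid i,j\in Q_0\}$ playing the role that $Q^p$ played for ordinary ideals, and with Lemma~\ref{lm21} standing in for Lemma~\ref{lm3}. The key structural input is the decomposition $N\otimes_\Bbbk N'=\bigoplus_{i,j\in Q_0}\varepsilon_i(N\otimes_\Bbbk N')\varepsilon_j$ into one-dimensional weight spaces, each spanned by the distinguished basis vector $\varepsilon_i\otimes\varepsilon_j$ by \eqref{eq5.3}. Applying the projectors $\varepsilon_i(\cdot)\varepsilon_j$ to any element of an arbitrary subbimodule $L\subseteq N\otimes_\Bbbk N'$ shows that every element of $L$ is a $\Bbbk$-linear combination of those $\varepsilon_i\otimes\varepsilon_j$ that lie in $L$.

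The \emph{if} direction is immediate, since a subbimodule of codimension one admits no proper intermediate subbimodule. For the \emph{only if} direction, assume $M$ covers $K$ and set $X:=K\cap Z$ and $Y:=M\cap Z$. The observation of the previous paragraph forces $X\subsetneq Y$, for $X=Y$ would give $M\subseteq K$. Choose $y\in Y\setminus X$ maximal with respect to $\preceq$, and consider $K':=K+AyA$. Then $K\subsetneq K'\subseteq M$, and the covering hypothesis gives $K'=M$.

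It remains to check that $\dim(K'/K)=1$. Unwinding the $A$-bimodule action on the weight decomposition of $N\otimes_\Bbbk N'$, the subbimodule $AyA$ is the $\Bbbk$-span of $\{z\in Z:z\succeq y\}$. For any such $z$ with $z\neq y$, one has $z\in M$ (since $y\in M$ and $M$ is a subbimodule), hence $z\in Y$; maximality of $y$ in $Y\setminus X$ then forces $z\in X\subseteq K$. Therefore $AyA\subseteq K+\Bbbk y$, so $K'=K+\Bbbk y$ and $\dim(M/K)=\dim(K'/K)=1$.

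The only substantive ingredient, and hence the only place where any real work is hidden, is the identification of $AyA$ with $\mathrm{span}\{z\in Z:z\succeq y\}$, together with the complementary statement that each element of a subbimodule lies in the span of the basis elements it contains. Both are matters of matching the bimodule action on one-dimensional weight spaces to the one-step generating relations defining $\preceq$ on $Z$; these are precisely the computations already carried out in the proof of Lemma~\ref{lm21}, so I do not expect a genuine obstacle here.
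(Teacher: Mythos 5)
Your proof is correct and follows exactly the route the paper intends: the paper proves Lemma~\ref{lm22} only by the remark that it is ``proved similarly to Lemma~\ref{lm4}'', and your argument is precisely that proof of Lemma~\ref{lm4} transported to $N\otimes_\Bbbk N'$ via the one-dimensional weight spaces $\varepsilon_i(N\otimes_\Bbbk N')\varepsilon_j$ and Lemma~\ref{lm21}. The supporting facts you flag (a subbimodule is spanned by the basis vectors of $Z$ it contains, and $AyA$ is the span of $\{z\succeq y\}$) are exactly the content already established in the Lemma~\ref{lm21} analogue of Lemma~\ref{lm3}, so there is no gap.
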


The set $\mathcal{M}$ of all subbimodules in $N\otimes_\Bbbk N'$ is partially ordered with respect to inclusions.
Denote by $\mathcal{M}^\mathrm{ind}$ the subset of $\mathcal{M}$ consisting of all indecomposable subbimodules.
Then $\mathcal{M}^\mathrm{ind}$ inherits from $\mathcal{M}$ the poset structure. Note that
$\mathcal{T}\subset \mathcal{M}$. For $K, M\in\mathcal{T}$ we write $K\Subset M$ if $M$ covers $K$ in $\mathcal{T}$.
This is a proper analogue, in our situation, of the ind-covering relation in Subsection~\ref{s2.2}.
We complete this subsection with the following question:

\begin{question}\label{qn23}
The  $A$-$A$-bimodule $N\otimes_{\Bbbk}N'$ has finitely many subbimodules. Do these generate a
finitary $2$-category?
\end{question}

\subsection{Quiver for the underlying algebra of the principal $2$-representation of $\mathscr{D}_A'$}\label{s5.5}

Similarly to Subsection \ref{s3.3}, in this subsection we describe the quiver $\mathcal{Q}^{(2)}$ of
$\mathscr{D}_A'(\mathtt{i},\mathtt{i})$. To determine $\mathcal{Q}^{(2)}$, we use Corollary~\ref{cr20}.
The vertices of $\mathcal{Q}^{(2)}$ are elements in  $\mathcal{T}$ up to isomorphism. For any two indecomposable
subbimodules $K,M\in \mathcal{T}$, there is exactly one arrow from $M$ to $K$ if $K\Subset M$ and there are
no arrows otherwise. Furthermore, for any chains of subbimodules
\begin{displaymath}
M_1\Subset M_2 \Subset\dots \Subset M_k\quad\text{ and }\quad
N_1\Subset N_2 \Subset\dots \Subset N_m
\end{displaymath}
such that $M_1=N_1$ and $M_k=N_m$, we have
\begin{displaymath}
\iota_{(M_{k-1},M_k)}\cdots\iota_{(M_2,M_3)}\iota_{(M_1,M_2)} =
\iota_{(N_{m-1},N_m)}\cdots\iota_{(N_2,N_3)}\iota_{(N_1,N_2)}.
\end{displaymath}
Therefore we have to impose all commutativity relations on this quiver which make sense. Since all
$\iota_{(K,M)}$ are injective, no other relations are necessary.
Explicit examples are given in Subsection~\ref{s6.2}.

\section{Examples}\label{s6}

In this section we collect some explicit examples.
To illustrate subbimodules of the identity bimodule and $N\otimes_\Bbbk N'$, we follow the graphic convention of \cite[Section~6.5]{GM}, that is, the left action of arrows in $Q$ are depicted by solid arrows and the right action of arrows in $Q$ are depicted by dashed arrows.

\subsection{The algebra from Example~\ref{ex2}}\label{s6.1}

Consider the algebra from Example~\ref{ex2}.
The planar graph for identity bimodule $_AA_A$ is an orientation for
the signed Hasse diagram of $Q^p$, presented as follows:

\begin{equation}\label{eq6.1}
\xymatrix{\varepsilon_1\ar[d] & \varepsilon_2 \ar[d]\ar@{-->}[dl]\ar[dr] & \varepsilon_3\ar@{-->}[dl]|\hole & \varepsilon_4\ar@{-->}[dl]\\
\alpha \ar[d]\ar[dr] & \beta \ar@{-->}[dl]|\hole & \gamma  \ar@{-->}[dl] &\\
\beta\alpha & \gamma\alpha  &&}
\end{equation}
Then indecomposable ideals of $A$ are connected full subgraph of \eqref{eq6.1} closed with respect to
the action of arrows (in the sense that if a subgraph contains some $w$ and there is an arrow, solid or dashed,
from $w$ to  $u$, then the subgraph contains $u$). For instance, ideals generated by one path of
zero length are listed here:
\begin{displaymath}
\xymatrix{\varepsilon_1\ar[d]&  &\quad&  &\varepsilon_2 \ar[d]\ar@{-->}[dl]\ar[dr] &   &\quad&  \varepsilon_3\ar@{-->}[d]& \quad& \varepsilon_4\ar@{-->}[d]\\
\alpha \ar[d]\ar[dr] &  &\quad&  \alpha\ar[d]\ar[dr]& \beta \ar@{-->}[dl]|\hole & \gamma\ar@{-->}[dl] &\quad& \beta\ar@{-->}[d]&\quad&
\gamma\ar@{-->}[d]\\
\beta\alpha &\gamma\alpha   &\quad&   \beta\alpha& \gamma\alpha &   &\quad&    \beta\alpha  &\quad& \gamma\alpha}
\end{displaymath}
and will be denoted by $I_1$, $I_2$, $I_3$, $I_4$, respectively.
For simplicity, for $a\in Q^p\setminus Q_0$ we denote by $I_a$ the ideal generated by $a$.
From Lemma~\ref{lm16}, each $I=\langle a \rangle$ for $a\in Q^p$ is isomorphic to $X_{t(a)s(a)}$.
For example, $I_\alpha\cong X_{21}$ and the corresponding $A$-$A$-bimodule isomorphism $\theta $ can be depicted in the following graph:
\begin{displaymath}
\xymatrix{ \beta\alpha&\overset{\theta}{\longrightarrow}&\beta\otimes \varepsilon_1 \\
\alpha\ar[u]\ar[d]&\overset{\theta}{\longrightarrow}&\varepsilon_2\otimes \varepsilon_1 \ar[u]\ar[d]\\
\gamma\alpha&\overset{\theta}{\longrightarrow}&\gamma\otimes \varepsilon_1}
\end{displaymath}

This following equation illustrates the proof of Lemma~\ref{lm17}. For $I=I_1+I_4+I_\beta$, we have
\begin{displaymath}
F_{i3}\circ\mathrm{Dp}_{I}\cong A\varepsilon_i\otimes_\Bbbk \varepsilon_3I\otimes_A{}_-
\cong  A\varepsilon_i\otimes_\Bbbk \beta A\otimes_A{}_-
\cong  F_{i2}.
\end{displaymath}
Note that $G(I)=\{\varepsilon_1,\varepsilon_4,\beta\}$ and the planar graph for $I$ is shown as follows:
\begin{displaymath}
\xymatrix{\varepsilon_1\ar[d] &  && \varepsilon_4\ar@{-->}[dl]\\
\alpha \ar[d]\ar[dr] & \beta \ar@{-->}[dl]|\hole & \gamma  \ar@{-->}[dl] &\\
\beta\alpha & \gamma\alpha  &&}
\end{displaymath}

By Theorem~\ref{thm13}, the element  $\mathrm{Dp}_{J}$, where $J=\langle\varepsilon_1,\varepsilon_2,\varepsilon_4\rangle$, is the Duflo involution corresponding to the cell $\{[\mathrm{Dp}_I]\}$. The signed Hasse diagram for $J$ is:
\begin{displaymath}
\xymatrix{\varepsilon_1\ar@{-}[d] & \varepsilon_2 \ar@{-}[d]\ar@{--}[dl]\ar@{-}[dr] && \varepsilon_4\ar@{--}[dl]\\
\alpha \ar@{-}[d]\ar@{-}[dr] & \beta \ar@{--}[dl]|\hole & \gamma  \ar@{--}[dl] &\\
\beta\alpha & \gamma\alpha  &&}
\end{displaymath}
From this diagram we have that the ideals covered by $J$ are $X_1:=\langle\varepsilon_2,\varepsilon_4\rangle $,
$X_2:=\langle\varepsilon_1,\varepsilon_4,\beta\rangle $ and $X_4:=\langle\varepsilon_1,\varepsilon_2\rangle $.
Since $IX_1=I_4\oplus I_\beta$, $IX_2=I_1+I_4$, $IX_4=I_1+I_\beta+I_\gamma$ and $IJ=I$,
then in this case all $\iota_{(IX_s,IJ)}$, where $s=1,2,4$, from \eqref{eq3.3} do not have direct summands
which are isomorphisms. This implies $\mathrm{Dp}_IL_{\mathrm{Dp}_{J}}\neq0$.

For any ideal $J'\not\subset J$, we know that $\mathrm{Dp}_IL_{\mathrm{Dp}_{J'}}=0$. Here is an illustration
for the latter. Consider $J'=\langle\varepsilon_1,\varepsilon_3\rangle\not\subset J$.
Then the ideals covered by $J'$ are $X'_1:=\langle\varepsilon_3,\alpha\rangle$ and
$X'_3:=\langle\varepsilon_1,\beta\rangle $.
We have $IX_1'=I_{\beta\alpha}\oplus I_{\gamma\alpha}$, $IX_3'=I_1$ and $IJ'=I_1$. Hence the map
$\iota_{(IX_3',IJ')}$ is the identity map and $\mathrm{Dp}_IL_{\mathrm{Dp}_{J'}}=0$.

\subsection{An example of an $A_2$-quiver}\label{s6.2}

Let $A=\Bbbk Q$, where $Q$ is given by:
$\xymatrix{1 \ar[r]^\alpha & 2&3\ar[l]_\beta}$.
The following graph represents the identity bimodule ${}_AA_A$:
\begin{displaymath}
\xymatrix{\varepsilon_1\ar[d] & \varepsilon_2 \ar@{-->}[dl]\ar@{-->}[dr] & \varepsilon_3\ar[d] \\
\alpha & &\beta  }
\end{displaymath}
The graph for $N\otimes_\Bbbk N'$ is shown as:
\begin{displaymath}
\xymatrix{\mathbf{11}\ar[d]&12\ar[d]\ar@{-->}[l]\ar@{-->}[r]&13\ar[d]\\
   \mathbf{21}&\mathbf{22}\ar@{-->}[l]\ar@{-->}[r]&\mathbf{23}\\
   31\ar[u]&32\ar@{-->}[l]\ar@{-->}[r]\ar[u]&\mathbf{33}\ar[u] }
\end{displaymath}
where $ij$ stands for $1$-dimensional $\Bbbk$-linear space with a basis $\varepsilon_i\otimes \varepsilon_j$.
The connected full subgraph with bold vertices describes the identity bimodule ${}_AA_A$.
The graph for the subbimodule $X_{ij}$ in $N\otimes_\Bbbk N'$ is the connected full subgraph whose vertices are
all vertices to which there is a path from $ij$. For simplicity, we abbreviate the subbimodule $X_{ij}$ by
$ij$. Then all indecomposable ideals (subbimodules) in $A$ are:
\begin{displaymath}
11,\ 22,\ 33,\ 21,\ 23,\
1122:=11+22,\ 2233:=22+33,\ 112233:=11+22+33.
\end{displaymath}

Note that $I_\alpha\cong 21, I_\beta\cong 23$ and $I_i\cong ii$ for $i=1,2,3$. Using the  chains
\begin{eqnarray}\label{eq6.2}
\begin{aligned}
21\Subset 11\Subset 1122\Subset 112233,&\quad&
23\Subset 22\Subset 2233\Subset 112233,\\
21\Subset 22\Subset 1122\Subset 112233,&\quad&
23\Subset 33\Subset 2233\Subset 112233,
\end{aligned}
\end{eqnarray}
we obtain that $\mathcal{Q}^{(1)}$ is the following graph:
\begin{equation}\label{eq6.3}
\xymatrix@R=1.5pc@C=1.5pc{&11\ar[dl]&& \\
   21 &&1122\ar[ul]\ar[dl]& \\
   &22\ar[ul]\ar[dl]&&112233\ar[ul]\ar[dl]\\
   23 &&2233\ar[ul]\ar[dl]&  \\
   &33\ar[ul]&& }
\end{equation}
As mentioned in Subsection~\ref{s3.3}, for the quiver underlying the principal $2$-representation we have to
impose all commutativity relations in $\mathcal{Q}^{(1)}$.

All nonisomorphic indecomposable subbimodules in $\mathcal{T}$ are:
\begin{displaymath}
11,\ 22,\ 33,\ 21,\ 23,\ 1122,\ 2233,\ 112233,\ 12,\ 13,\ 31,\ 32.
\end{displaymath}
The quiver $\mathcal{Q}^{(2)}$ is then given by:
\begin{displaymath}
\xymatrix@R=1pc@C=1pc{\mathbf{11}\ar[dd]&&12\ar[dl]\ar[rr]&&13\ar[dd]\\
&\mathbf{1122}\ar[ul]\ar[dr]&&\mathbf{112233}\ar[dd]|\hole\ar[ll]&\\
   \mathbf{21}&&\mathbf{22}\ar[ll]\ar[rr]&&\mathbf{23}\\
   &&&\mathbf{2233}\ar[dr]\ar[ul]&\\
   31\ar[uu]&&32\ar[ll]\ar[ur]&&\mathbf{33}\ar[uu]}
\end{displaymath}
where we also impose all commutativity relations. It contains $\mathcal{Q}^{(1)}$ as a
subquiver given by the connected full subgraph with bold vertices. The additional to \eqref{eq6.2} covering relations are
\begin{displaymath}
23\Subset 13\Subset 12,\quad
1122\Subset 12,\quad
21\Subset 31 \Subset 32,\quad
2233\Subset 32.
\end{displaymath}

\subsection{Another example of an $A_2$ quiver}\label{s6.3}

\begin{example}\label{ex24}{\rm
If Q is given by: $\xymatrix{1 \ar[r]^\alpha &2\ar[r]^\beta&3 }$, then the corresponding identity module $_AA_A$ is  depicted as follows:
\begin{displaymath}
\xymatrix{\varepsilon_1\ar[d] & \varepsilon_2 \ar@{-->}[dl]\ar[d] & \varepsilon_3\ar@{-->}[dl] \\
\alpha\ar[d] &\beta\ar@{-->}[dl] &  \\
\beta\alpha&&}
\end{displaymath}
Following the same convention for notation as in Subsection~\ref{s6.2},
the graph for $N\otimes_\Bbbk N'$ is shown as:
\begin{displaymath}
\xymatrix{\mathbf{11}\ar[d]&12\ar[d]\ar@{-->}[l]&13\ar[d]\ar@{-->}[l]\\
   \mathbf{21}\ar[d]&\mathbf{22}\ar[d]\ar@{-->}[l]&23\ar[d]\ar[d]\ar@{-->}[l]\\
   \mathbf{31}&\mathbf{32}\ar@{-->}[l]&\mathbf{33}\ar@{-->}[l] }
\end{displaymath}
The connected full subgraph with bold vertices gives the identity bimodule ${}_AA_A$.
All indecomposable ideals (subbimodules) in $A$ are:
\begin{eqnarray*}
&11,\ 22,\ 33,\ 21,\ 32,\ 31,\
1122,\ 1132:=11+32,\ 1133,\\
& 2233,\ 2132:=21+32,\ 2133:=21+33,\ 112233.
\end{eqnarray*}
Note that $I_\alpha\cong 21, I_\beta\cong 32, I_{\beta\alpha}\cong 31$ and $I_i\cong ii$ for $i=1,2,3$.
Using the chains
\begin{eqnarray*}
\begin{aligned}
31\Subset21\Subset11\Subset1132\Subset1122\Subset 112233,
&\quad&& 31\Subset32\Subset33\Subset2133\Subset 2233\Subset 112233,\\
31\Subset 21\Subset 2132\Subset 1132\Subset 1133\Subset 112233,
&\quad&&31\Subset 32\Subset 2132\Subset2133\Subset1133\Subset112233,\\
31\Subset21\Subset2132\Subset22\Subset1122\Subset112233,
&\quad&&31\Subset 32\Subset2132\Subset22\Subset2233\Subset112233,
\end{aligned}
\end{eqnarray*}
we compute all the arrows in $\mathcal{Q}^{(1)}$:
\begin{equation}\label{eq6.4}
\xymatrix@R=1.5pc@C=1.5pc{
&21\ar[ddl]&11\ar[l]&1132\ar[l]\ar[ddl]&1122\ar[l]\ar[ddl]&\\
&&&&&\\
31&&2132\ar[uul]\ar[ddl]&22\ar[l]&1133\ar[uul]|\hole\ar[ddl]|\hole&112233\ar[uul]\ar[ddl]\ar[l]\\
&&&&&&\\
&32\ar[uul]&33\ar[l]&2133\ar[l]\ar[uul]&2233\ar[l]\ar[uul]&\\
&&&&&&}
\end{equation}
and impose all commutativity relations. Note that $Q$ in this example is a subquiver of the quiver in
Example~\ref{ex2}, thus the graph for $\mathcal{Q}^{(1)}$ in Example~\ref{ex2} is larger than \eqref{eq6.4}.

All nonisomorphic indecomposable subbimodules in $\mathcal{T}$ are:
\begin{eqnarray*}
&11,\ 22,\ 33,\ 21,\ 32,\ 31,\
1122,\ 1132,\ 1133,\
2233,\
2132,\
2133,\ 112233,\ 12,\ 13,\ 23.
\end{eqnarray*}
The corresponding quiver $\mathcal{Q}^{(2)}$ is then given by:
\begin{displaymath}
\xymatrix@R=1pc@C=1pc{
&\mathbf{11}\ar[ddd]&&&12\ar[dl]&&&13\ar[lll]\ar[ddd]\ar[dl]\\
&&&\mathbf{1122}\ar[dl]\ar[ddr]|\hole&&&\mathbf{112233}\ar[lll]\ar[dl]\ar[ddd]&\\
&&\mathbf{1132}\ar[uul]\ar[ddr]&&&\mathbf{1133}\ar[lll]\ar[ddd]&&\\
&\mathbf{21}\ar[ddd]&&&\mathbf{22}\ar[dl]&&&23\ar[dl]\\
&&&\mathbf{2132}\ar[ull]\ar[ddr]&&&\mathbf{2233}\ar[ull]|(.56)\hole\ar[dl]&\\
&&&&&\mathbf{2133}\ar[ull]\ar[drr]&&\\
&\mathbf{31}&&&\mathbf{32}\ar[lll]&&&\mathbf{33}\ar[lll]}
\end{displaymath}
with all possible commutativity relations. The connected full subgraph with bold vertices is exactly the quiver
\eqref{eq6.4}. Apart from  \eqref{eq6.4} we have the following covering relations in $\mathcal{T}$:
\begin{displaymath}
1122\Subset 12\Subset 13,\quad
112233\Subset 13,\quad
2233\Subset 23 \Subset 13.
\end{displaymath}
}
\end{example}

\noindent
Department of Mathematics, East China Normal University, Minhang District,
Dong Chuan Road 500, Shanghai, 200241, PR China,

E-mail address: {\tt scropure\symbol{64}126.com}.

\end{document}